\documentclass[11pt]{article}
\usepackage[utf8]{inputenc}
\usepackage{amsmath}
\usepackage{amsfonts}
\usepackage{amssymb}
\usepackage{amsthm}
\usepackage{amsrefs}
\usepackage{mathrsfs} 
\usepackage[all]{xy}
\usepackage{enumerate}

\setlength{\parskip}{3pt}
\setlength{\topmargin}{-.2in}
\linespread{1.3}

\textheight=8in
\textwidth=6.25in
\oddsidemargin=.125in
 
\theoremstyle{plain}
  \newtheorem{thm}{Theorem}[section]
  \newtheorem{lem}[thm]{Lemma}
  \newtheorem{prop}[thm]{Proposition}
  \newtheorem{cor}[thm]{Corollary} 
\theoremstyle{definition}
  \newtheorem{defn}[thm]{Definition}
  \newtheorem{rmk}[thm]{Remark}
  \newtheorem{ex}[thm]{Example}

\theoremstyle{plain}

\DeclareMathOperator{\im}{im}
  
\numberwithin{equation}{section}
\allowdisplaybreaks
%%%%%%%%%%%%%%%%
\def\CO{\mathcal{O}}
\def\CF{\mathcal{F}}
\def\CC{\mathcal{C}}
\def\CA{\mathcal{A}}

\def\LI{{L^{-1}}}

\def\be{\beta}
\def\ga{\gamma}
\def\th{\theta}

\def\lam{\lambda}
\def\sig{\sigma}
\def\ti{\times}
\def\om{\omega}
\def\Om{\Omega}
\def\dpp{\partial_+}
\def\dpm{\partial_-}
\def\dpa{{\partial_{+A}\,}}
\def\dma{{\partial_{-A}\,}}
\def\w{\wedge}
\def\coker{\mathrm{coker~}}
\def\id{\textrm{id}}
\def\bbe{\bar{\beta}}
\def\bxi{\bar{\xi}}
\def\bsig{\bar{\sigma}}

\def\pa{\partial}
\def\bpa{{\bar \partial}}
\def\bpap{{\bar\partial_{A}}}

\newcommand\bo{\mathbf{1}}
%%%%%%%%%%%%%%%

%%%%%%%%%%%%%%%%

\begin{document}

\title{
\bf\
{Symplectic Flatness and Twisted Primitive Cohomology}
}

\author{Li-Sheng Tseng and Jiawei Zhou \\
\\
}

\date{April 26, 2022}%\today}
\maketitle

\begin{abstract} 
We introduce the notion of symplectic flatness for connections and fiber bundles over symplectic manifolds.  Given an $A_\infty$-algebra, we present a flatness condition that enables the twisting of the differential complex associated with the $A_\infty$-algebra. The symplectic flatness condition arises from twisting the $A_\infty$-algebra of differential forms constructed by Tsai, Tseng and Yau.  When the symplectic manifold is equipped with a compatible metric, the symplectic flat connections represent a special subclass of Yang-Mills connections.  We further study the cohomologies of the twisted differential complex and give a simple vanishing theorem for them.

%Constructing such bundles is motivated by, but not exactly same as the twisting of de Rham and Dolbeault complexes. Instead, we twist an $A_\infty$-algebra of special differential forms constructed by Tsai, Tseng and Yau. This algebra is equivalent to a mapping cone of the de Rham complex of the symplectic manifold. Our work generalize this statement to the twisted case, and use this method to calculate the cohomology. We also explore some examples and properties of symplectically flat connections. As we will see, a symplectically flat connection is a special type of Yang-Mills connection when the bundle is equipped with a compatible metric. Separately, the tangent bundle of a K\"ahler manifold is symplectically flat implies that the base is K\"ahler-Einstein. 

\end{abstract}

\tableofcontents

\section{Introduction}
 
For a vector bundle $E$ over a smooth manifold $M$ of dimension $d$, we can study differential forms $\Om^*(M,E)$ taking values in the fiber space of $E$.  For these forms, we can write a de Rham-type complex for $\Om^*(M,E)$: 
\begin{align}\label{tdeRham}
\xymatrix@R=30pt@C=30pt{
0\; \ar[r] & \; \Om^0(M,E) \ar[r]^{d_A} &\; \Om^1(M,E) \ar[r]^{~~d_A}& ~ \ldots ~\ar[r]^{d_A~~~}&\; \Om^d(M,E)\ar[r]^{~~~d_A} & \; 0
}
\end{align}
where locally $d_A = d+A$ and $A\in \Om^1(M, \mathrm{End}\,E)$ is the connection acting on $E$.  The above complex is only a differential complex if the curvature of the connection, $F=(d_A)^2= dA + A \w A =0$.  This is the well-known fact that the de Rham complex can be {\it twisted} by a flat bundle, i.e. a bundle $E$ that allows for a connection whose curvature vanishes.   

Now suppose $M$ is a complex manifold of dimension $d=2n$ and $E$ is a complex vector bundle over $M$.  On complex manifolds, the differential forms can be decomposed into $(p,q)$ components, that is 
$$
\Om^k(M,E) = \bigoplus_{k=p+q} \, \CA^{p,q}(M,E)\,,
$$
and the exterior derivative decomposes into the Dolbeault operators, $d= \pa + \bpa$.  The decompositions of forms and exterior derivative allow us to consider a more refined complex, the Dolbeault complex, which can be twisted as well:
\begin{align}\label{tDolbeault}
\xymatrix@R=30pt@C=30pt{
0\; \ar[r] & \; \CA^{0,0}(M,E) \ar[r]^{\bpap} &\; \CA^{0,1}(M,E) \ar[r]^{~~~~\bpap}& ~ \ldots ~\ar[r]^{\bpap~~~~~}&\; \CA^{0,n}(M,E)\ar[r]^{~~~~~~~\bpap} & \; 0
}
\end{align}
where locally $\bpap= \bpa + A^{0,1}$ and $A^{0,1}$ is the $(0,1)$ component of the connection form $A$.  That the above complex is actually a differential complex imposes the condition 
\begin{align}\label{cflat}
\left({\bpap}\right)^2=(\bpa+A^{0,1} )^2 = \bpa A^{0,1} + A^{0,1} \w A^{0,1} = F^{0,2} =0\,.
\end{align}
Notice that this is a weakening of the smooth flatness condition $F=0$ in that \eqref{cflat} requires only that the $F^{0,2}$ component of the curvature vanishes.  But a complex vector bundle with a connection such that $F^{0,2}=0$ is well-known to be equivalent to the bundle $E$ having the structure of a holomorphic vector bundle.  And so in the complex case, \textit{complex flat} bundles which can twist the Dolbeault complex are just holomorphic vector bundles.

Flat bundles and holomorphic vector bundles are basic and important objects on smooth and complex manifolds, respectively.  In this paper, we are interested in exploring special vector bundles on symplectic manifolds.  Specifically, we ask a simple question: is there a \textit{symplectic flatness} condition for a vector bundle $E$ over a symplectic manifold, $(M^{2n}, \om)$?  

We can answer this question by proceeding in a similar way as in the smooth and complex cases described above.  Let us first recall that on a symplectic manifold, $(M^{2n}, \om)$, the differential forms can be expressed in a polynomial expansion in powers of $\omega$.  This decomposition of forms is commonly called the Lefschetz decomposition and given by %and applies also to forms taking values in $E$, 
\begin{align*}
\Om^k(M) = \bigoplus_{k=2r+s} \, \om^r \w P^s(M)\,,
\end{align*}
where $P^s(M)$ for $s=0,1,\ldots, n$, denotes the space of primitive forms, i.e. forms that we can not extract an $\om$ from them.  More precisely,  a form $\beta\in P^s(M)$ if there does not exist an $\xi \in \Om^{s-2}(M)$ such that $\beta = \om \w \xi$.  And besides forms, the exterior derivative, like in the complex case, also has a decomposition into two linear differential operators $(\dpp, \dpm)$ dependent on the symplectic structure \cite{TY2} 
\begin{align}\label{dcomp}
d=\dpp + \om \w \dpm
\end{align}
and with desirable properties: $(\dpp)^2 = (\dpm)^2 =0$ and $\om \w \dpp\dpm = - \om \w \dpm \dpp$.  Together, they lead to a differential complex that is elliptic (see \cite{TY2} and references therein)
\begin{align}\label{primcomplex}
\xymatrix@R=30pt@C=30pt{
0\; \ar[r] & \; P^{0}(M) \ar[r]^\dpp &\; P^{1}(M) \ar[r]^\dpp& ~ \ldots ~\ar[r]^\dpp&\; P^{n-1}(M) \ar[r]^\dpp&\; P^{n}(M) \ar[d]^{-\dpp\dpm}\\
0\; & \; P^{0}(M) \ar[l]_{-\dpm} &\; P^{1}(M)\ar[l]_{~~-\dpm}& ~ \ldots ~\ar[l]_{~~~-\dpm}&\; P^{n-1}(M) \ar[l]_{-\dpm}&\; P^{n}(M) \ar[l]_{~~~-\dpm} \; 
}
\end{align}
%\begin{align*}\xymatrix@R=30pt@C=30pt{
%0\; \ar[r] & \; P^{0} \ar[r]^\dpp &\; P^{1} \ar[r]^\dpp& ~ \ldots ~\ar[r]^\dpp&\; P^{n} \ar[r]^{\dpp\dpm}&\; P^{n}\ar[r]^\dpm &\; P^{n-1}\ar[r]^\dpm& ~\dots ~\ar[r]^\dpm&\; P^{0} \ar[r] & 0}
%\end{align*}
Hence, we can try to extend this complex to one acting on $P^*(M,E)$, the space of primitive forms with values in $E$, and also twist the operators $(\dpp, \dpm, \dpp\dpm)$ with a connection form as in \eqref{tdeRham} and \eqref{tDolbeault}.  However, the peculiar definitions of the differentials in \eqref{primcomplex} raise immediate issues. In particular, note that both $\dpm:P^s \to P^{s-1}$ and $\dpp\dpm: P^n \to P^n$ do not increase the degree of the form by one.  Can we simply twist these operators in the primitive complex above by a connection one-form?

The twisting procedure in the symplectic case turns out to be a bit more subtle.  Nevertheless, twisting the primitive complex can still be achieved because the complex has an $A_\infty$-algebra structure \cite{TTY}.  We will show in this paper, that given any $A_\infty$-module structure of differential forms taking values in $E$, there is a natural twisting of the differentials of the complex by a connection one-form.  The new twisted differentials however do not represent a deformation of the $A_\infty$-module and hence does not lead to a new $A_\infty$-module structure.  

Our result in the symplectic case is the following. We can write down a twisted primitive complex of forms taking values in $E$, if the connection one-form on $E$ satisfies the following \textit{symplectic flatness} condition: 
\begin{defn}
For $(M^{2n},\omega)$ a symplectic manifold, let $\pi:E\to M$ be a vector bundle with a connection, $d_A$ the corresponding covariant derivative and $F$ the curvature two-form.  We call the connection \textbf{symplectically flat} if
\begin{equation}\label{defsf}
\begin{split}
F& =\Phi\,\omega \\ %\,, \qquad  and \qquad
d_A\Phi&= d\Phi + [A,\Phi]=0\,,
\end{split}
\end{equation}
where $\Phi\in \Omega^0(M, \mathrm{End}\,E)$.  If such a connection exists on $E$, we say $E$ is a symplectically flat bundle.
\end{defn}
\begin{rmk}
For a principal bundle $P$
over $(M^{2n}, \om)$, we say $P$
is symplectically flat if there exists a connection form on $P$
whose curvature satisfies \eqref{defsf}.
\end{rmk}
Though written as two equations, condition \eqref{defsf} for symplectically flat is effectively just a single equation in all $2n$ dimensions.  For when $n=1$, the first equation, $F=\Phi\, \om\,$, is trivial and gives no condition.  On the other hand, when $n\geq 2$, the second equation, $d_A \Phi=0\,$, becomes unnecessary as it is implied by the first equation of \eqref{defsf} and the Bianchi identity.

\begin{table}
\centering
{\renewcommand{\arraystretch}{1.25}
\renewcommand{\tabcolsep}{.2cm}
\begin{tabular}{c|c|c|c}\label{Table1}
&Smooth $M$ & Complex $(M^{2n}, J)$& Symplectic $(M^{2n}, \om)$\\
\hline
Forms &$\Om^k$ & $\Om^k = \oplus \,\CA^{p,q}$ & $\Om^k =\oplus\; \om^r \w P^s$\\
\hline
Differential & $d$ & $d=\pa + \bpa$ & $d = \dpp + \om \w \dpm$\\
\hline
Flatness Condition &$F=0$& $ F^{0,2}=0$ & $F=\Phi\, \om,\  d_A\Phi=0$\\
\hline
Special Local Frame& $A=0$ & $A^{0,1}=0$& $A = \Phi \lambda\,$, ~$\Phi=const.$, $d\lambda=\om$
%~~(g_{\alpha\beta}=g_c\, e^{\Phi f})$ \\

%(transition functions) & $(d g_{\alpha\beta}=0)$ & $(\bpa g_{\alpha\beta}=0)$ &  $\Phi=const.$, $d\lambda=\om$,$\,g_c=const.$  \\
\end{tabular}}
\caption{A comparison of smooth, complex and symplectic flat bundles.}
%{The transition functions $g_{\alpha\beta}$ in the smooth case can be taken to be constant, in the complex case, taken to be holomorphic, and in the symplectic case, proportional to the exponential of a constant $\Phi$ times a real function $f$.}
\end{table}

To arrive at the symplectic flatness condition, we shall begin in Section 2 by reviewing the $A_\infty$-algebra structure of the primitive cochain complex of Tsai, Tseng, and Yau \cite{TTY}.  We will present a general procedure to twist the differential complex associated with any $A_\infty$-algebra.  This general procedure gives the flatness conditions for de Rham, Dolbeault and the primitive symplectic complex and results in the twisted primitive complex given in \eqref{tprimcomplex}.   

With the symplectic flatness condition established, it is helpful to have examples of symplectic flat bundles and understand some of their properties.  We will do this in Section 3.  Indeed, we will show that symplectically flat connections are a special type of Yang-Mills connections in the presence of a compatible metric on $(M^{2n}, \om)$.  Also, as shown in Table \ref{Table1}, one can choose special local frames such that the local connection form takes the form $A=\Phi\,\lambda$, where $\Phi$ is a constant matrix and $d\lambda=\om.$  We also point out an interesting relationship: when $\om$ is an integral class and we can define a circle bundle $X$ over $M$ whose Euler class is given by $\om$, (i.e. the prequantum circle bundle of $M$), the symplectic flat bundle lifts to a flat bundle on $X$.

Finally, having twisted the primitive elliptic complex, we analyze the resulting cohomologies on $P^*(M,E)$ in Section 4.   We will calculate these twisted primitive  cohomologies on $\mathbb{R}^{2n}$ and also prove a simple vanishing theorem for the twisted cohomologies when $\Phi$ is invertible.  

In this paper, we will mainly focus on symplectically flat vector bundles.  An extensive discussion of symplectically flat principal bundles including their classification will be given in a companion work \cite{TZ}.

\

\noindent{\it Acknowledgements.~} 
We would like to acknowledge the helpful input of Matthew Gibson at the onset of this project.  We are thankful to Si Li, Lihan Wang, Damin Wu and Shing-Tung Yau for helpful discussions.  The first author would also like to acknowledge the support of the Simons Collaboration Grant No. 636284.   

This paper is written for a special issue of the Journal of Geometric Analysis in honor of Peter Li.  We are fortunate to have had the opportunity to interact closely with Peter Li through our affiliations with the UC Irvine Geometry/Topology group which Peter Li shaped and built-up over twenty plus years as a leading, senior faculty at UC Irvine. Peter Li has without fail been supportive and generous with kind advice to us.  We are grateful to him for his strong encouragement in our research work.

\section{Twisting the differential complex of an $A_\infty$-algebra}

On symplectic manifolds, Tsai-Tseng-Yau \cite{TTY} showed that the primitive cochain complex in \eqref{primcomplex} can be extended to an $A_\infty$-algebra.  (More precisely, it is a commutative $A_3$ algebra.)  We will first describe this algebra below.  For ease, we will call this  algebra of differential forms on symplectic manifolds the \textbf{TTY algebra}.  We then proceed to give a heuristic description of how to twist the differential of the primitive TTY algebra.  Going further, we show in generality how the differential of any $A_\infty$-module can be twisted. The twisted differential of the TTY algebra is just a special case of this general $A_\infty$-module twisting.

\subsection{Preliminaries: TTY algebra}

We mostly follow the notations of \cite{TTY}.

As mentioned, differential forms on a symplectic manifold $(M^{2n}, \om)$ has a Lefschetz decomposition.  Any $\eta_k\in \Om^k(M)$ can be expressed as a polynomial in $\om$:
\begin{align}\label{Lefd}
\eta_k= \beta_k + \om \w \beta_{k-2} + \ldots + \om^p \w \beta_{k-2p}+ \ldots
\end{align}
where $\{\beta_k, \beta_{k-2}, \ldots, \beta_{k-2p}, \ldots\}$ are all primitive forms in $P^*(M)$ and are determined uniquely by $\eta_k$ and $\om$.  The non-degeneracy of $\om$ also allows us to define the following three operators on differential forms \cite{TTY}:
\begin{enumerate}
\item $L^p$: When $p=k\geq 0$, $L^k = \om^k\w$.  When $p=-k <0$, $L^{-k}$ removes $k$ powers of $\omega$ from a form.  For example, acting on the Lefschetz component $\om^r \w \be_s$  for $\be_s \in P^s$, $L^{-k}(\omega^{r}\wedge\be_s)=\om^{r-k}\w\be_s$ if $k\leq r < n-s+1$. When $k > r$, then $L^{-k}(\omega^r\wedge\beta_s)=0$.
\item $*_r$: For $\alpha\in\Omega^k$,  $*_r\,\alpha=L^{n-k}\alpha$.
\item $\Pi^p$: By Lefschetz decomposition, every $\eta\in\Omega^k$  can be uniquely written as $\beta+\omega^{p+1}\wedge\gamma$, where $\beta=\beta_k + \om \w \beta_{k-2} + \ldots + \om^p \w \beta_{k-2p}$ in the decomposition of \eqref{Lefd}. Then, $\Pi^p\eta=\beta$. 
\end{enumerate}
The third operator, $\Pi^p$, is a projection operator and defines the space of $p$-filtered forms: $F^p\Om^k(M)= \Pi^p\left[\Om^k(M)\right]$  for $0\leq p \leq n$. Note that the Lefschetz decomposition of $p$-filtered forms has at most terms of order $\om^p$.  

For each $p=0, \ldots, n$,  there is a TTY algebra consisting of forms in $F^p\Om^*(M)$.   In this paper, we are mainly concerned with the $p=0$ filtered case $F^{0}\Om^*(M)=P^*(M)$ which consist of just primitive forms.  To simplify notation, we will write $\Pi=\Pi^0$, i.e. the projection onto the primitive component of the Lefschetz decomposition.
 
The TTY algebra is an $A_\infty$-algebra%, which generalizes the notion of a differential graded algebra (DGA)
.  Let us recall the definition of an $A_\infty$-algebra (for a reference, see \cite{Keller}).
\begin{defn}
An \textbf{$A_\infty$-algebra} is a $\mathbb{Z}$-graded vector space 
$\mathcal{A}=\bigoplus_{k\in \mathbb{Z}} \mathcal{A}^k$ endowed with graded linear maps
$$m_k:\mathcal{A}^{\otimes k}\to \mathcal{A}\,,\qquad k\geq1$$
of degree $2-k$ satisfying
\begin{align}\label{Airel}
\sum_{r+s+t=k}(-1)^{r+st}m_{r+t+1}(\textbf{1}^{\otimes r}\otimes m_s\otimes \textbf{1}^{\otimes t})=0\,.
\end{align}
\end{defn}
The first three relations of \eqref{Airel} are the following:
\begin{align}
m_1m_1 &= 0\, \label{Ar1}\\
m_1m_2 &= m_2(m_1\otimes\textbf{1}+\textbf{1}\otimes m_1)\, \label{Ar2}\\
m_2(\textbf{1}\otimes m_2-m_2\otimes\textbf{1}) 
&= m_1m_3+m_3(m_1\otimes\textbf{1}\otimes\textbf{1}+\textbf{1}\otimes m_1\otimes\textbf{1}+\textbf{1}\otimes\textbf{1}\otimes m_1)\,.\label{Ar3}
%\\\dots \quad & \qquad\qquad \ldots
\end{align}
By the third relation, if $m_3=0$, then $m_2$, which acts as a product, is associative.  An $A_\infty$-algebra with only $\{m_1, m_2\}$ non-zero is simply a differential graded algebra (DGA).  The TTY algebra is however generally non-associative with $m_3\neq 0$, but $m_k$ for $k\geq 4=0$ can be set to zero.  Hence, it can be more precisely called an $A_3$ algebra.
 
We now define the TTY-algebra $(\mathcal{F}, m_1, m_2, m_3)$ on primitive forms $F^0\Om^*(M)=P^*(M)$.  First, the elements of the primitive TTY-algebra are those of the differential complex \eqref{primcomplex}.  The grading follows that of the complex, and to help distinguish the two sets of primitive forms, we use the $\pm$ subscript
\begin{align}\label{Fdef}
\mathcal{F} = \left\{P^0_+, P^1_+, \ldots, P^n_+, P^n_-, \ldots, P^1_-, P^0_-\right\}
\end{align}
where for $k=0, 1, \dots, n$, $P^k_+=P^k$ have grading $k$, and  $P^k_-=P^k$ have grading $2n+1-k$.  As for the $m_k$ maps, the first map, $m_1$, is just the differential of the complex \eqref{primcomplex}.

\textbf{The $m_1$ map.}
\begin{align}\label{mdef}
m_1\beta=
\begin{cases}
~~\,\partial_+\be, & \text{ for } \be\in P^k_+\,,\quad k<n, \\
-\partial_+\partial_-\be, & \text{ for } \be\in P^n_+, \\
-\partial_-\be, & \text{ for } \be\in P^k_-. %\,,\quad k<n.
\end{cases}
\end{align}
The $m_2$ map is the product operation and is dependent on the pair of primitive spaces that it acts on.  At times, we will denote it by the product symbol and write $m_2(\be,\ga)$ as $\be\times\ga$. This $\times$ operation is graded commutative.

\textbf{The $m_2$ map.}

\begin{enumerate}
\item For $\be\in P^j_+,\ga\in P^k_+$, set
$$
\be\times\ga=\Pi(\be\wedge\ga)+\Pi *_r[-dL^{-1}(\be\wedge\ga)+(\partial_-\be)\wedge\ga+(-1)^j\be\wedge(\partial_-\ga)].
$$
Note that when $j+k\leq n$, the second term is trivial, and when $j+k>n$, the first term is trivial.
\item For $\be\in P^j_+,\ga\in P^k_-$, set
$$
\be\times\ga=(-1)^j*_r[\be\wedge(*_r\ga)].
$$
\item For $\be\in P^j_-,\ga\in P^k_+$, set
$$
\be\times\ga=*_r[(*_r\be)\wedge\ga].
$$
\item For $\be\in P^j_-,\ga\in P^k_-$, set
$$
\be\times\ga=0.
$$
\end{enumerate}
The $m_3$ maps measures the non-associativity of the product $\times$.  The non-associativity only arises when all three forms in the input come from $P^*_+$.

\textbf{The $m_3$ map.}
\begin{enumerate}
\item For $\be\in P^i_+, \ga\in P^j_+, \sigma\in P^k_+$, and $i+j+k\geq n+2$, we set
$$
m_3(\be,\ga,\sigma)=\Pi*_r[\be\wedge L^{-1}(\ga\wedge\sigma)-L^{-1}(\be\wedge\ga)\wedge\sigma].
$$
%Actually, $m_3(\be,\ga,\sigma)=0$ if $i+j+k=n+1$.
\item For all other cases, we set $m_3(\be,\ga,\sigma)=0$.
\end{enumerate}
Finally, $m_k=0$ for $k\geq 4$.

\subsection{Twisting the differential of the primitive TTY algebra}

%  that it is not natural to define an But if we keep the product unchanged, i.e. $m'_2=m_2=\w$, the twisting does not preserve the DGA structure.  In particular, the Leibniz product rule condition \eqref{Ar2} fails:
%\begin{align*}
%m'_1m_2'(\eta, \xi) \neq m'_2(m'_1\eta, \xi) \pm m'_2(\eta, m'_1 \xi) \\
%(d+A)(\eta \w \xi)
%\end{align*}

We now give a heuristic description of the twisting of the primitive elliptic complex \eqref{primcomplex}.  Let $E$ be a vector bundle and consider $\Om^*(M,E)$ and $P^*(M,E)$, the space of differential forms and primitive forms, respectively, taking values in $E$.    
We start with the relation of $(\dpp, \dpm)$ with $d$ in \eqref{dcomp}.  
%Express $\eta_k \in \Om^k(M,E)$ in Lefschetz-decomposed $\eta_k = \beta_k + \om \w \beta_{k-2} +  \om^2 \w \beta_{k-4} + \ldots$, we define two operators $\Pi: \Om^k \rightarrow P^k$ and $L^{-1}: \Om^k \rightarrow \Om^{k-2}$ as follows:
%\begin{align*}
%\Pi (\eta_k) &= \beta_k \\
%L^{-1} (\eta_k) & = \beta_{k-2} +  \om \w \beta_{k-4} + \ldots
%\end{align*}
In particular, acting on primitive forms, $\dpp = \Pi\, d$ and $\dpm = L^{-1} d$.   Now, we can decompose the twisted exterior derivative into two components when acting on primitive forms $\beta \in P^s(M,E)$. Locally with $d_A = d+A\,\w\,$, we can write
\begin{align*}
d_A \beta &= (d+A\,\w)\,\beta = (\dpp + \om\w  \dpm)\,\beta + (A \w \beta)\\
&= \Pi \left[(d + A\,\w)\, \beta\right] + \om \w L^{-1}\left[(d+A \,\w)\, \beta\right] \\
&= \Pi (d_A\, \beta) + \om \w L^{-1} (d_A\, \beta)
\end{align*}
where we have noted in the second line that a primitive form wedge a one-form has a Lefschetz decomposition into two terms, and in the third line, that the decomposition is independent of the choice of local frames. %Hence, $ \Om^1(M, E) \w P^s(M,E) = P^{s+1}(M,E) \oplus \om \w P^{s-1}(M,E)$.  %We consider a vector bundle $E$ over a symplectic manifold $(M,\omega)$ whose fiber is $V$, (with a symplectically flat connection $A$).
%\begin{defn} On a bundle $E$ with symplectically flat connection $A$, the twisted differential operators $\dpa\!: P^i(M,E)\to P^{i+1}(M,E)$ and $\dma\!: P^i(M,E)\to P^{i-1}(M,E)$ are defined as follows:
%\begin{align}\label{dApmdef}
%\dpa \beta_i = \Pi\left(d+A\right)\beta_i\, , \qquad \dma \beta_i=L^{-1}\left(d+A\right)\beta_i\,,
%\end{align}
%such that
%\begin{align}\label{dAdef} 
%\left(d+ A\right)\beta_i = \dpa\beta_i + \om \w\dma\beta_i.
%\end{align}
%\end{defn}
This allows us to define the global twisted operators:
\begin{align*}
\dpa &= \Pi\, d_A : P^i(M,E)\to P^{i+1}(M,E)\\
\dma&=L^{-1} d_A: P^i(M,E)\to P^{i-1}(M,E)
\end{align*}
such that
\begin{align}\label{dAdef}
d_A = \dpa + \om \w \dma \qquad {\rm acting~on~} P^k(M,E)
\end{align}
which gives a twisted version of \eqref{dcomp}.  Locally, we have the expressions
\begin{align}\label{dApmdef}
\dpa \beta_i = \Pi\left[\left(d+A\,\w\right)\beta_i\right]\, , \qquad \dma \beta_i=L^{-1}\left[\left(d+A\,\w\right) \beta_i\right]\,.
\end{align}

Now we can express the action of $(d_A)^2$ on a primitive form in two ways.  First, note that the commutator  
\begin{align}\label{dAcom}
[d_A , \om]=[d+A,\om] =0\,. 
\end{align}
Therefore, we have
\begin{align}
(d_A)^2 \beta &= d_A \left(\dpa \beta + \om \w \dma \beta \right)\nonumber\\
&= \dpa\dpa \beta + \om \w\dma \dpa \beta + \om \w d_A \dma \beta \nonumber\\
& = \dpa\dpa \beta + \om \w \left(\dma\dpa + \dpa \dma\right)\beta + \om^2 \w \dma\dma \beta \label{dAs1}
\end{align}
Alternatively, we can also write
\begin{align}\label{dAs2}
 (d_A)^2 \beta & = F\w \beta\nonumber\\
 &=(F_0 + \om\, \Phi) \w \beta\nonumber\\
 & = F_0 \w \beta + \om \w \Phi\beta 
\end{align}
where in the second line, we have Lefschetz decomposed the curvature $F=F_0 + \om \, \Phi$ with $F_0=\Pi\,F\in P^2(M, \mathrm{End}\, E)$ and $\Phi\in \Om^0(M, \mathrm{End}\, E)$.  If $F_0=0$, then comparing \eqref{dAs1} with \eqref{dAs2} and matching the Lefschetz components, we find
\begin{align}
\left(\dpa\!\right)^2 = \left(\dma\!\right)^2 &=0\,,\label{dpmrela}\\
\om \left(\dpa\dma +\dma\dpa\right) &=\om\, \Phi\,. \label{dpmrelb}
\end{align}
This suggests the following twisted primitive complex
\begin{align}\label{tprimcomplex}
\xymatrix@R=30pt@C=30pt{
0\; \ar[r] & \; P^{0}(M,E) \ar[r]^\dpa &\; P^{1}(M,E) \ar[r]^{~~~\dpa}& ~ \ldots ~\ar[r]^{\dpa~~~~}&\; P^{n-1}(M,E) \ar[r]^\dpa&\; P^{n}(M,E) \ar[d]^{-\dpa\dma+\Phi}\\
0\; & \; P^{0}(M,E) \ar[l]_{-\dma~~~} &\; P^{1}(M,E)\ar[l]_{~~-\dma}& ~ \ldots ~\ar[l]_{~~~~-\dma}&\; P^{n-1}(M,E) \ar[l]_{-\dma~~~~}&\; P^{n}(M,E) \ar[l]_{~~~-\dma} \; 
}
\end{align}
This is a differential complex if $F$ satisfies the symplectically flat condition of $F=\Phi \om$ (i.e. $F_0 =0 $)  and $d_A\Phi =0$ given in Definition \ref{defsf}.   In particular, we write out the composition of the differential operators in the middle of the complex:
\begin{align*}
\left(-\dpa\dma+\Phi\right)\dpa \beta_{n-1} &= \left[\dpa (\dpa\dma - \Phi)+\Phi\dpa\right]\beta_{n-1} =\left[-\Phi \dpa + \dpa \Phi\right] \beta_{n-1} \\
&=\Pi\left[\left(-\Phi(d+A)+ (d+A)\Phi\right)\beta_{n-1}\right]\\
&=\Pi\left[\left(d\Phi + [A,\Phi]\right)\beta_{n-1} \right]=\Pi\left[(d_A\Phi)\w \beta_{n-1}\right]
\end{align*}
\begin{align*}
\dma\left(-\dpa\dma + \Phi\right)\beta_n &= \left(\dpa\dma -\Phi\right)\dma\beta_n + \dma\left(\Phi\beta_n\right)=\left(-\Phi \dma + \dma \Phi \right)\beta_n \\
&= -\Phi L^{-1}(d+A)\beta_n + L^{-1}(d+A)(\Phi \beta_n) \\
&=  L^{-1}\left(d\Phi + [A,\Phi]\right)\beta_n = L^{-1}\left((d_A\Phi)\w\beta_n\right)
\end{align*}
which both vanish since $d_A\Phi=d\Phi+[A,\Phi]=0\,$.

In the next subsection, we will give a more systematic description of how to obtain the twisted differentials.   We will show how all $A_\infty$-algebras can be twisted and that the symplectic flat condition needed above matches exactly the required condition for general twisting.

\subsection{Twisting the differential of an $A_\infty$-module}

We are interested to twist the primitive elliptic complex \eqref{primcomplex} in a similar manner to how the de Rham complex is twisted in \eqref{tdeRham}.  Prior to twisting, the untwisted de Rham complex together with the wedge product gives the de Rham DGA: $(\Om^*(M), m_1=d, m_2=\w)$.  In the presence of a vector bundle $E$ with fiber $V$,  twisting the complex consists of locally tensoring by $V$, i.e. $\Om^*(U)\otimes V$ and modifying the differential $m_1=d$ to $m'_1= d+A: \Om^k(U)\otimes V \to \Om^{k+1}\otimes V$.  In order that the twisted complex remains a differential complex, we obtain the condition
\begin{align*}
m'_1\circ m'_1 = (d+ A) (d+ A) = dA + A \w A = F= 0\,.
\end{align*}
Let us make two observations. First, modifying $m_1 \to m'_1$ does not result in a new DGA consisting of $(\Om^*(M)\otimes V, m'_1=d+A, m'_2)$.  Indeed, we have not and do not need to define a new product $m'_2$ on $\Om^*(M)\otimes V$.  So twisting the de Rham complex does not represent a deformation preserving the DGA structure.   Second, without modifying the maps $(m_1, m_2)$, we can tensor the de Rham DGA by matrices: $(\Om^*(U)\otimes \text{End}\ V, m_1=d, m_2=\w)$.  This is still a DGA with 
\begin{align*}
m_2(\eta_1\otimes e_1 , \eta\otimes e_2) = m_2(\eta_1, \eta_2)\otimes (e_1\cdot e_2)
\end{align*}
Note that in the context of this tensored de Rham DGA, the flatness condition can be written simply in terms of the deformed $m'_1$ map:
\begin{align*}
F=dA + A \w A = m_1(A) + m_2(A, A) = m'_1(A) =0.
\end{align*}

Now to twist the primitive elliptic complex, we first observe that any $A_\infty$-algebra also has a natural tensor product with matrices. Let $\Omega$ be an $A_\infty$-algebra. We twist $\mathcal{A}=\Omega\otimes \text{End} \ V$ where $V$, a vector space, is the fiber of $E$.
$m_k$ acting on $\Omega^{\otimes k}$ can be extended to $m_k$ acting on $\mathcal{A}^{\otimes k}$ where 
\begin{align*}
m_k(a_1 \otimes e_1,\ldots,a_k\otimes e_k) = m_k(a_1,\ldots,a_k)\otimes(e_1\circ\cdots \circ e_k).
\end{align*}

Elements of the complex are vector-valued.  They are elements of $\mathcal{B}=\Omega\otimes V$ which to be precise is an $A_\infty$-module over $\mathcal{A}$.  (See for example \cite{Keller} for the definition of an $A_\infty $ module.)  The $m_k$ acting on $\mathcal{B}$ is given by
\begin{align*}
 m_k(a_1 \otimes e_1,\ldots,a_{k-1}\otimes e_{k-1},a_k\otimes v) = m_k(a_1,\ldots,a_k)\otimes(e_1\circ\cdots \circ e_{k-1})v. 
\end{align*}

%Let $\mathcal{B}$ be an $A_\infty$-module over an $A_\infty$-algebra $\mathcal{A}$, and $A\in\mathcal{A}$ be a grading one element. It is useful to consider a given connection acting on a differential graded module of differential forms taking values on some vector bundle $E$ in this way. When $\mathcal{B}$ is the space of local sections, the connection can be represented as an operator $m'_1$ such that $m'_1(B)=m_1(B)+m_2(A,B)$ for any element $B\in\mathcal{B}$. This $A$ can be thought of as the connection 1-form. Moreover, $m'_1$ squares to zero, i.e. $m'_1\circ m'_1 =0$, if and only if $m'_1(A)=0$.

%\begin{ex}[Twisted de Rham complex]
%Suppose $E$ is a vector bundle over a manifold $M$ and admits a local trivialization over $U\subset M$. A connection acting on $\Omega^*(U,E)$ can be represented as $d+A$, where the differential operator twisted by some $A\in\Omega^*(U,\text{End }E)$. $m'_1(A)=dA+A\wedge A=0$ over any $U$ if and only if the curvature $F=0$, i.e. this connection is flat.
%\end{ex}

The above twisting for differential graded algebras can be generalized to twist any $A_\infty$-module $\mathcal{B}$.  In \cite{Gibson}, Gibson described the case for $A_3$-algebra (i.e. $m_k=0$ for all $k\geq4$) and showed that $m'_1(-)=m_1+m_2(A,-)-m_3(A,A,-)$ squares to zero when $m'_1(A)=0$.  Here, we give the general statement for any $A_\infty$-module $\mathcal{B}$. 

\begin{defn}\label{def-of-twisting-differential}
Let $\{\mathcal{A},m_k\}$ be an $A_\infty$-algebra, $\mathcal{B}$ an $A_\infty$-module over $\mathcal{A}$, and $A$ an element of $\mathcal{A}$ of grading one.  We define the operator $m'_1$ as
\begin{align*}
m'_1(B) & = m_1(B)+m_2(A,B)-m_3(A,A,B)-m_4(A,A,A,B)+\ldots \\
& = \sum \delta_k m_k(A^{\otimes (k-1)}\otimes B)
\end{align*}
where
\begin{align*}
\delta_k=(-1)^{\frac{(k-1)(k-2)}{2}}=
\begin{cases}
1, & k=4m+1 \text{ or } 4m+2,\\
-1, & k=4m+3 \text{ or } 4m.
\end{cases}
\end{align*}
\end{defn}

%\begin{rmk}
%To relate to twisting, the $\mathcal{A}$ above corresponds to $\mathcal{A}\otimes \text{End}\ V$ and $\mathcal{B}$ is the module over $\mathcal{A}\otimes \text{End}\ V$.
%\end{rmk}

%\begin{rmk}
Here, we use the notation $m'_1$ since it is obtained by twisting $m_1$, but $m'_1$ is generally not the differential of an $A_\infty$-module.  As mentioned above, we do not need morphisms $m'_k$ for $k\geq 2$ as we are not aiming to obtain a deformed $A_\infty$-structure.  The theorem below will give a sufficient condition that ensures that $m'_1\circ m'_1=0$. 
%\end{rmk}

\begin{thm}\label{twisting-differential}
$m'_1\circ m'_1=0$ if $m'_1(A)=0$.
\end{thm}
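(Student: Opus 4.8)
The plan is to compute $m_1' \circ m_1'(B)$ directly from the definition, expanding it as a double sum
\[
m_1'(m_1'(B)) = \sum_{j,k} \delta_j \delta_k \, m_j\!\left(A^{\otimes(j-1)} \otimes m_k(A^{\otimes(k-1)} \otimes B)\right),
\]
and then to reorganize the terms according to the total number of $A$'s appearing, say $N = j + k - 2$, so that the vanishing of $m_1' \circ m_1'$ follows degree-by-degree in $N$. The key input is that the $A_\infty$-relations \eqref{Airel} (in the module form over $\mathcal{B}$) furnish, for each fixed arity, a precise identity among the compositions $m_r(\mathbf{1}^{\otimes \bullet} \otimes m_s \otimes \mathbf{1}^{\otimes \bullet})$; when all slots except the last are filled with the fixed element $A$ and the last with $B$, the only terms that survive with a nontrivial inner $m_s$ acting purely on $A$'s are the ones producing $m_s(A^{\otimes s})$, i.e. a contribution from $m_1'(A)$. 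So the strategy is: (i) use the module $A_\infty$-relations to rewrite each partial sum over $j,k$ with $j+k-2 = N$ fixed; (ii) observe that after applying \eqref{Airel} the surviving terms all contain a factor $m_1'(A)$ sitting in some tensor slot; (iii) conclude they vanish by hypothesis.

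The main bookkeeping step is handling the signs. I would first verify the claim in low arity to fix conventions: $m_1' m_1'(B) = m_1 m_1(B) + [\,m_1 m_2(A,B) + m_2(A, m_1 B) + m_2(m_1 A, B)\,] + \ldots$, where the bracketed $N=1$ terms collapse via \eqref{Ar2} to $m_2(m_1 A, B)$ after cancellation, and then the $N=2$ terms combine via \eqref{Ar3} and \eqref{Ar1} to leave exactly $m_2(m_2(A,A), B) - m_3(m_1 A, A, B) - \ldots$, reassembling into $m_2(m_1'(A), B)$-type terms. The pattern to prove in general is that
\[
m_1'(m_1'(B)) = \sum_{k \ge 1} (\pm)\, m_k\!\left(\text{(slots of $A$'s with one slot replaced by } m_1'(A)) \otimes B\right),
\]
which is zero once $m_1'(A) = 0$. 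Establishing that the signs $\delta_j\delta_k$ together with the $(-1)^{r+st}$ from \eqref{Airel} conspire to give precisely this is the crux; the cleanest route is an induction on $N$, peeling off the $A_\infty$-relation of arity $N+2$ at each stage, or equivalently recognizing $m_1'$ as the differential induced by the Maurer–Cartan-type element $A$ on the bar complex and invoking the standard fact that $(d + [\text{MC element}, -])^2 = [\text{curvature}, -]$ with curvature $= m_1'(A)$.

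The hard part will be the sign/index management in step (i)–(ii): one must carefully match, for each fixed $N$, the terms of $m_1' m_1'$ against the arity-$(N+2)$ instance of \eqref{Airel} specialized to inputs $A, \ldots, A, B$, checking that the positional sum $\sum_{r+s+t = N+2}$ with $(-1)^{r+st}$ reproduces exactly the cross-terms $\delta_j \delta_k m_j(A^{\otimes(j-1)} \otimes m_k(A^{\otimes(k-1)} \otimes B))$ with $j + k = N+2$, up to the overall $\delta$-normalization — and then that the leftover "inner $m_s$ on pure $A$'s" terms package into $m_1'(A)$ inserted in a single slot. I expect no conceptual obstacle, only the need to be scrupulous with the combinatorial signs; a generating-function or bar-complex reformulation would shorten the argument considerably and I would present that if the direct expansion proves unwieldy.
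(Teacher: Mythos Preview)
Your proposal is correct and follows essentially the same route as the paper: expand the double sum, group by total arity, apply the $A_\infty$-relation \eqref{Airel} at that arity to split off the terms where the inner $m_s$ acts on a pure block of $A$'s, and recognize that block as $m'_1(A)$. The paper dispatches the sign bookkeeping you flag as the hard part not by induction or a bar-complex reformulation but via the one-line identity $\delta_{r+1}\delta_s=(-1)^{r(s-1)}\delta_{r+s}$, which lets the whole computation go through directly.
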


We first note a simple relation for the $\delta_k$'s.

\begin{lem}
For any $r\geq 0,s\geq 1$, we have 
$$\delta_{r+1}\delta_s=(-1)^{r(s-1)}\delta_{r+s}.$$
\end{lem}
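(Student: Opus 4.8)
The plan is to verify the identity $\delta_{r+1}\delta_s=(-1)^{r(s-1)}\delta_{r+s}$ by direct computation from the closed formula $\delta_k=(-1)^{(k-1)(k-2)/2}$. First I would write each side as a power of $-1$ and reduce the problem to an equality of exponents modulo $2$. The left-hand side has exponent
\[
\frac{(r)(r-1)}{2}+\frac{(s-1)(s-2)}{2},
\]
while the right-hand side has exponent
\[
r(s-1)+\frac{(r+s-1)(r+s-2)}{2}.
\]
So the lemma is equivalent to the congruence
\[
\frac{r(r-1)}{2}+\frac{(s-1)(s-2)}{2}\;\equiv\; r(s-1)+\frac{(r+s-1)(r+s-2)}{2}\pmod 2.
\]

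Next I would expand the quadratic $(r+s-1)(r+s-2)$ and collect terms. Writing $(r+s-1)(r+s-2) = r^2 + s^2 + (\text{linear and cross terms})$, one finds
\[
\frac{(r+s-1)(r+s-2)}{2} - \frac{r(r-1)}{2} - \frac{(s-1)(s-2)}{2} = rs - r = r(s-1),
\]
as an identity of integers (this is a routine algebraic simplification, not a modular one, so no parity subtleties arise here). Rearranging gives exactly
\[
\frac{r(r-1)}{2}+\frac{(s-1)(s-2)}{2} + r(s-1) = \frac{(r+s-1)(r+s-2)}{2},
\]
which upon reducing mod $2$ yields the desired congruence, hence the lemma. (One should double-check that all three half-integer expressions $\tfrac{r(r-1)}{2}$, $\tfrac{(s-1)(s-2)}{2}$, $\tfrac{(r+s-1)(r+s-2)}{2}$ are genuine integers, which they are since each is a product of consecutive integers divided by $2$; this legitimizes treating the identity integrally before passing to parity.)

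Alternatively, and perhaps more cleanly, I would give a proof by induction on $r$. The base case $r=0$ reads $\delta_1\delta_s=\delta_s$, which holds since $\delta_1=1$. For the inductive step, I would relate $\delta_{r+1}$ to $\delta_r$: from the formula, $\delta_{k+1}/\delta_k = (-1)^{k-1}$, so $\delta_{r+1}=(-1)^{r-1}\delta_r$ and $\delta_{r+s}=(-1)^{r+s-2}\delta_{r+s-1}$. Substituting the induction hypothesis $\delta_r\delta_s=(-1)^{(r-1)(s-1)}\delta_{r+s-1}$ then gives
\[
\delta_{r+1}\delta_s=(-1)^{r-1}\delta_r\delta_s=(-1)^{r-1}(-1)^{(r-1)(s-1)}\delta_{r+s-1}=(-1)^{(r-1)s}\,(-1)^{-(r+s-2)}\delta_{r+s},
\]
and one checks $(-1)^{(r-1)s-(r+s-2)}=(-1)^{r(s-1)}$ by expanding the exponent: $(r-1)s-(r+s-2) = rs - s - r - s + 2 = rs - r - 2s + 2 \equiv rs - r = r(s-1)\pmod 2$. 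Either route works; the computation is short and the only thing to be careful about is parity bookkeeping when signs are written as powers of $-1$, which is why I would lean on the integral identity $\tfrac{(r+s-1)(r+s-2)}{2} - \tfrac{r(r-1)}{2} - \tfrac{(s-1)(s-2)}{2} = r(s-1)$ as the clean organizing fact. There is no real obstacle here — this is a lemma whose entire content is the arithmetic of the sign conventions, recorded so that it can be invoked repeatedly in the proof of Theorem~\ref{twisting-differential}.
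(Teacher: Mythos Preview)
Your first approach is correct and is essentially identical to the paper's proof: both reduce the claim to the integral identity $\frac{r(r-1)}{2}+\frac{(s-1)(s-2)}{2}=\frac{(r+s-1)(r+s-2)}{2}-r(s-1)$, which immediately gives the sign equality. Your alternative induction argument is also valid but unnecessary, since the direct algebraic identity already settles the lemma in one line.
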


\begin{proof}
Observe that $\delta_{r+1}\delta_s=(-1)^{\frac{r(r-1)+(s-1)(s-2)}{2}}$ and the power
$$
\frac{r(r-1)+(s-1)(s-2)}{2}=\frac{(r+s-1)(r+s-2)}{2}-r(s-1).
$$
\end{proof}

\begin{proof}[Proof of Theorem \ref{twisting-differential}.]
For any $B\in\mathcal{B}$,
$$
{m'_1}^2B=\sum_{i,j\geq1}\delta_i\delta_j m_i(A^{\otimes(i-1)}\otimes m_j(A^{\otimes(j-1)}\otimes B)).
$$
Let $r=i-1$ and $s=j$. Then $r\geq 0$, $s\geq 1$ and the sum can be written as
$$
\sum_{k=1}^{\infty}\sum_{r+s=k}\delta_{(r+1)}\delta_s m_{r+1}[A^{\otimes r}\otimes m_s(A^{\otimes(s-1)}\otimes B)].
$$
Rewrite every term as
\begin{align*}
\quad \delta_{(r+1)}\delta_s m_{r+1}[A^{\otimes r}\otimes m_s(A^{\otimes(s-1)}\otimes B)] 
& =(-1)^{rs}\delta_{r+1}\delta_s m_{r+1}(1^{\otimes r}\otimes m_s)(A^{\otimes(r+s-1)}\otimes B) \\
& =(-1)^r \delta_km_{r+1}(1^{\otimes r}\otimes m_s)(A^{\otimes(k-1)}\otimes B)
\end{align*}

By the definition of an $A_\infty$-algebra, we have
$$
\sum_{\substack{r+s=k \\ r\geq 0, s\geq 1}}(-1)^r m_{r+1}(1^{\otimes r}\otimes m_s)+\sum_{\substack{r+s+t=k \\ r\geq 0, s,t\geq 1}}(-1)^{r+st} m_{r+t+1}(1^{\otimes r}\otimes m_s\otimes 1^{\otimes t})=0.
$$
So ${m'_1}^2B$ can be described as
\begin{align*}
{m'_1}^2B &= -\sum_{k=1}^{\infty}\sum_{\substack{r+s+t=k \\ r\geq 0, s,t\geq 1}}(-1)^{r+st}\delta_k m_{r+t+1}(1^{\otimes r}\otimes m_s\otimes 1^{\otimes t})(A^{\otimes(k-1)}\otimes B) \\
&= \sum_{r\geq 0, s,t\geq 1}(-1)^{r+s(r+t)+1}\delta_{r+s+t} m_{r+t+1}(A^{\otimes r}\otimes m_s(A^{\otimes s})\otimes A^{\otimes (t-1)}\otimes B)
\end{align*}

Since $\delta_{r+t+1}\delta_s=(-1)^{(r+t)(s+1)}\delta_{r+s+t}$, we have $(-1)^{r+s(r+t)+1}\delta_{r+s+t}=(-1)^{t+1}\delta_{r+t+1}\delta_s$. When we take the sum over $s$ first, we get
$$
{m'_1}^2B=\sum_{r\geq 0, t\geq 1}(-1)^{t+1}\delta_{r+t+1}m_{r+t+1}\left(A^{\otimes r}\otimes \sum_{s\geq 1}\delta_s m_s(A^{\otimes s})\otimes A^{\otimes (t-1)}\otimes B\right).
$$
By assumption $\sum_{s\geq 1}\delta_s m_s(A^{\otimes s})=m'_1A=0$. Thus, ${m'_1}^2B=0$.
\end{proof}
The above prescription for $A_\infty$ twisting motivated by twisting the de Rham DGA gives the standard twisting for the Dolbeault complex and the primitive TTY-algebra.
\begin{ex}[Twisted Dolbeault complex]
Let $E$ be a vector bundle over a complex manifold $M$. Given a local trivialization over $U\subset M$, the $(0,1)$ part of a connection acting on $\Omega^{0,*}(U,E)$ can be represented as $\bar{\partial}+A^{0,1}$ with $A\in\Omega^1(U,\text{End }E)$. $m'_1(A^{0,1})=\partial A^{0,1}+A^{0,1}\wedge A^{0,1}=0$ over any $U$ if and only if $F^{0,2}=0$, i.e. this connection is holomorphically flat.
\end{ex}

\begin{ex}[Twisted primitive TTY-algebra]\label{symplectically flat}
Let $E$ be a vector bundle over a symplectic manifold $(M^{2n},\omega)$.  Given a local trivialization over $U\subset M$, a connection can be represented locally as $d+A$ where $A\in\Omega^1(U,\text{End }E)=P^1(U,\text{End }E)$.  The primitive complex   $P^*(U,E)$ is an $A_\infty$-module over the TTY-algebra $ P^*(U,\text{End }E)$.  Hence, we can define the operator $m'_1$ as in Definition \ref{def-of-twisting-differential} and is explicitly given by
\begin{align}\label{mpdef}
m'_1\beta=
\begin{cases}
\dpa \beta & \text{ for } \beta\in P^k_+(U,E)\,,~~ k<n\,, \\
(-\dpa\dma+\Phi)\beta & \text{ for } \beta\in P^n_+(U,E)\,, \\
-\dma \beta & \text{ for } \beta\in P^k_-(U,E)\,.
\end{cases}
\end{align}
In details for the $\beta\in P^n_+(U,E)$ case, we have
\begin{align*}
m'_1\beta &=m_1(\beta)+m_2(A,\beta)-m_3(A,A,\beta)\\
&= -\partial_+\partial_-\beta_n+\Pi \left[-dL^{-1}(A\w \beta_n)+\dpm A \w \beta_n-A\w\dpm\beta_n\right] \\
&\qquad  -\Pi\left[A\w L^{-1}(A\w \beta_n)-L^{-1}(A\w A)\beta_n\right] \\
&= -\dpa\dma\beta_n+\Pi\, L^{-1}(dA+A\w A)\beta_n =-\dpa\dma\beta_n+\Phi\,\beta_n\,.
\end{align*}
Furthermore, 
\begin{align*}
m'_1(A) =
\begin{cases}
\Pi (dA + A \w A) & \text{ for } n\geq 2\\
-d L^{-1} (dA + A \w A) + A \w L^{-1}(dA+A \w A) - L^{-1}(dA+A \w A) \w A & \text{ for } n = 1 
\end{cases}
\end{align*}
Therefore, $m'_1(A)=0$ if and only if the curvature has no primitive component, i.e.  $F=\Phi\,\omega$, and also 
$d\Phi+[A,\Phi] =0$, having noted that $L^{-1} F = \Phi$. The second equation means that $\Phi$ is covariantly constant which implies the global condition $d_A\Phi=0\,$. 
\end{ex}

\begin{rmk}
For the higher $p$-filtered TTY algebra (i.e. $p>0$), if we define $m'_1$ as in Definition \ref{def-of-twisting-differential}, then $m'_1(A)=0$ is just the the usual flat connection condition.
\end{rmk}

\section{Examples and properties of symplectically flat bundles}

We first give some simple examples of symplectically flat bundles.

\begin{ex}
When the principal bundle $P$ is rank 1, the condition of symplectically-flat becomes $F=dA=c\, \om$ for some constant $c$. Specifically, a circle bundle whose Euler class is $c\,\om$ would be symplectically flat.   
\end{ex}

\begin{ex}
A projectively flat bundle has curvature 
$$
F=c\,\om\,\mathbf{I}
$$ 
where $c$ is a constant and  $\mathbf{I}$ is the identity map over the fiber.  Hence, projectively flat bundles are  symplectically flat.
\end{ex}

\begin{ex}
When $\dim M=2$, the symplectically flat condition is exactly identical to satisfying the Yang-Mills equations in the presence of a compatible metric.  More generally, for $\dim M \geq 2$, a symplectically flat connection is always a critical point of the Yang-Mills functional with respect to a compatible metric
$$
\int_M \text{tr } (F\wedge *F)\,.
$$
Using the relation $*\,\om = \frac{1}{(n-1)!}\,\om^{n-1}$, it is straightforward to see that a symplectically flat curvature satisfies the Yang-Mills equation
\begin{align*}
d_A^* F = - * d_A * (\Phi\, \om) = - \frac{1}{(n-1)!}\,* \left((d_A \Phi) \, \om^{n-1}\right)=0\,.  
\end{align*}
Hence, symplectically flat connections are a special subset of Yang-Mills solutions.
\end{ex}

\begin{ex}
When the symplectic manifold has dimension $\dim M=4$, a symplectically flat connection satisfies the self-dual condition with respect to a compatible metric
$$
F=*F\,.
$$
%It is evident here 
Clearly here, the symplectically flat condition $F=\Phi\, \om$ is a stronger condition than the self-dual condition.  
\end{ex}

Below we give some properties of symplectically flat bundles.

\begin{prop}\label{local constant}
Suppose there is a symplectically-flat connection on a manifold $M$ with curvature $F=\Phi\,\om$. Locally, there exists a trivialization such that $\Phi$ is represented as a constant matrix, and the covariant derivative can be written as $d+\Phi\lambda$ for some local 1-form $\lambda$ satisfying $d\lambda=\omega$.
%gauge transformation $g$ such that $g\Phi g^{-1}$ is a constant. 
\end{prop}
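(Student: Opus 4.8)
The plan is to reduce to the genuinely flat case by peeling off the ``abelian-looking'' part $\Phi\lambda$ of the connection. First I would fix a point $p\in M$ and pass to a small ball $U\ni p$; since $\omega$ is closed, the Poincar\'e lemma supplies a $1$-form $\lambda$ on $U$ with $d\lambda=\omega$. The key observation is that, in the given trivialization, the connection $1$-form $A'=A-\Phi\lambda$ is \emph{flat}: expanding $F(A')=dA'+A'\wedge A'$ and using $\lambda\wedge\lambda=0$ together with the identity $A\wedge(\Phi\lambda)+(\Phi\lambda)\wedge A=[A,\Phi]\wedge\lambda$, one gets
\[
F(A') \;=\; F-\Phi\,\omega-(d\Phi+[A,\Phi])\wedge\lambda \;=\; F-\Phi\,\omega-(d_A\Phi)\wedge\lambda,
\]
which vanishes precisely by the two equations of the symplectic flatness condition $F=\Phi\,\omega$ and $d_A\Phi=0$.

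Next I would invoke the standard fact that a flat connection over a sufficiently small (simply connected) neighborhood is gauge-equivalent to the trivial connection $d$: choose a gauge transformation $g\colon U\to GL(V)$, shrinking $U$ if necessary, carrying $A'$ to $0$. Under the same change of frame the full connection form $A=A'+\Phi\lambda$ becomes $0+\Phi_1\lambda$, where $\Phi_1=g\Phi g^{-1}$ is simply the matrix of the endomorphism $\Phi$ in the new frame: the inhomogeneous $(dg)g^{-1}$ term of the gauge transformation is entirely spent on sending $A'$ to $0$, and since $\lambda$ is scalar‑valued it is left untouched, so $d\lambda=\omega$ still holds. Thus in this trivialization $d_A=d+\Phi_1\lambda$.

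Finally, to see $\Phi_1$ is constant I would use that $d_A\Phi=0$ is gauge-invariant, being the covariant exterior derivative of the section $\Phi$ of $\mathrm{End}\,E$; hence in the new trivialization $0=d\Phi_1+[\Phi_1\lambda,\Phi_1]=d\Phi_1+[\Phi_1,\Phi_1]\,\lambda=d\Phi_1$, which forces $\Phi_1$ to be a constant matrix on the connected set $U$. The only non-elementary ingredient is the local triviality of a flat connection used in the second step (equivalently Frobenius integrability of the horizontal distribution); everything else is the curvature computation above and routine gauge bookkeeping, the one point deserving a little care being the verification that conjugation alters only the matrix $\Phi$ and not the normalization $d\lambda=\omega$.
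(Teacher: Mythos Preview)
Your proposal is correct and follows essentially the same route as the paper: subtract $\Phi\lambda$ to obtain a flat connection, gauge it away via the local triviality of flat connections (the paper's Lemma~\ref{local flat}), and then read off that $\Phi_1=g\Phi g^{-1}$ is constant from $d_A\Phi=0$ in the new frame. The only cosmetic difference is that the paper verifies $d\Phi_1=0$ by first computing $[\Phi_1\lambda,\Phi_1]=0$ directly and then invoking the conjugated equation $d\Phi_1+[\text{new }A,\Phi_1]=g(d_A\Phi)g^{-1}=0$, whereas you phrase the same two steps as gauge-invariance of $d_A\Phi$ followed by $[\Phi_1,\Phi_1]=0$.
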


The proof of the theorem is based on the following lemma (see for example, \cite{Ramanan} Proposition 5.8):

\begin{lem}\label{local flat}
Locally, if $d+\tilde{A}$ is a flat covariant derivative, then $\tilde{A}=g^{-1}dg$ for some matrix valued function $g$.
\end{lem}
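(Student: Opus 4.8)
The plan is to prove the lemma by constructing the invertible matrix-valued function $g$ directly, reducing the problem to the existence of a full frame of covariantly constant (parallel) sections. Observe that finding $g$ with $\tilde{A}=g^{-1}dg$ is equivalent, upon setting $h=g^{-1}$, to finding an invertible matrix-valued $h$ solving
\[
dh + \tilde{A}\,h = 0,
\]
since then $g^{-1}dg = h\,d(h^{-1}) = -(dh)\,h^{-1} = \tilde{A}$. The columns of such an $h$ are precisely parallel sections of the trivial bundle over $U$ with respect to $d+\tilde{A}$, so it suffices to produce, near any point $p$, a basis of parallel sections.

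First I would write the equation in local coordinates $x^1,\dots,x^d$ on $U$, with $\tilde{A}=\sum_i \tilde{A}_i\,dx^i$, turning $dh+\tilde{A}h=0$ into the overdetermined first-order system $\partial_i h = -\tilde{A}_i\, h$ for $i=1,\dots,d$. The compatibility of this system requires $\partial_j\partial_i h = \partial_i\partial_j h$; differentiating and substituting the equations yields
\[
\bigl(\partial_i \tilde{A}_j - \partial_j \tilde{A}_i + [\tilde{A}_i,\tilde{A}_j]\bigr)\,h = 0.
\]
The coefficient is exactly $F_{ij}$, the component of the curvature $F=d\tilde{A}+\tilde{A}\wedge\tilde{A}$, so the integrability condition for the system holds identically precisely because $d+\tilde{A}$ is flat.

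Given integrability, I would invoke the Frobenius theorem: regarding the equations $\partial_i h = -\tilde{A}_i h$ as defining a rank-$d$ horizontal distribution on $U\times GL(V)$ that is involutive by the curvature computation, Frobenius guarantees, through the point $(p,I)$, a unique integral leaf, hence a unique local solution $h$ with prescribed initial value $h(p)=I$. Equivalently, on a contractible neighborhood one may define $h$ by parallel transport from $p$; flatness makes the transport independent of the chosen path by homotopy invariance, and the resulting $h$ satisfies the system. Invertibility of $h$ near $p$ follows since $h(p)=I$ and $\det h$ is continuous, so after shrinking $U$ the matrix $h$ stays invertible; setting $g=h^{-1}$ then gives $\tilde{A}=g^{-1}dg$ as required.

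The main obstacle is the existence step, namely converting the equivalence ``flatness $\Leftrightarrow$ integrability'' into an actual local solution. The curvature-to-integrability computation is routine, but producing $h$ honestly requires the Frobenius theorem (or, in the path-transport formulation, a careful argument that flatness makes parallel transport independent of the path within a simply connected neighborhood). Ensuring the solution frame remains linearly independent — equivalently that $g=h^{-1}$ is defined — is a minor point handled by the normalization $h(p)=I$ together with continuity of the determinant.
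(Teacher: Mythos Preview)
Your argument is correct and is the standard proof of this classical fact via Frobenius integrability (equivalently, path-independence of parallel transport on a simply connected neighborhood). The paper itself does not supply a proof of this lemma; it merely cites \cite{Ramanan}, Proposition~5.8, so there is no argument in the paper to compare against. One small remark: you do not actually need to shrink $U$ to keep $h$ invertible, since $\det h$ satisfies the linear equation $d(\det h)=-\operatorname{tr}(\tilde{A})\det h$ and hence never vanishes once $h(p)=I$; but for a local statement your continuity argument is already sufficient.
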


\noindent \textit{Proof of Proposition \ref{local constant}. }
First choose local sections $\{s_1,\ldots,s_r\}$ forming a frame of $\Gamma(U,E)$, where $r$ is the rank of vector bundle $E$. Then $d_A$ can be written as $d+A$, and $\Phi$ can be represented by a matrix $\Phi_s$ with respect to this frame. Take $\lambda\in\Omega^1(U)$ such that $d\lambda=\om$. Since $d+A$ is a symplectically-flat covariant derivative, a straightforward calculation shows that
$$
d(A-\Phi_s\lambda)+(A-\Phi_s\lambda)\wedge(A-\Phi_s\lambda)=0.
$$
i.e. $d+A-\Phi_s\lambda$ is a local flat covariant derivative.

By Lemma \ref{local flat} there exists some invertible $g$ such that
$$
A-\Phi_s\lambda=g^{-1}dg.
$$
Thus,
$$
gAg^{-1}+gdg^{-1}=g\Phi_s g^{-1}\lambda.
$$
Then we have
$$
[gAg^{-1}+gdg^{-1},g\Phi_s g^{-1}]=g\Phi_s^2 g^{-1}\lambda-g\Phi_s^2 g^{-1}\lambda=0.
$$
On the other hand,
$$
d(g\Phi_s g^{-1})+[gAg^{-1}+gdg^{-1},g\Phi_s g^{-1}]=g(d\Phi_s+[A,\Phi_s])g^{-1}=0.
$$
Therefore, $d(g\Phi_s g^{-1})=0$, i.e. $g\Phi_s g^{-1}$ is a constant. Let
$$
[s'_1\,\ldots\, s'_r]=[s_1\, \ldots\, s_r]g^{-1}
$$
be another local frame. Then $\Phi$ will be represented by $g\Phi_s g^{-1}$ with respect to this new frame. And the local covariant derivative becomes
$$
d+gAg^{-1}+gdg^{-1}=d+g\Phi_s g^{-1}\lambda\,.
$$
\qed

As an application of Proposition \ref{local constant}, suppose $(M,\omega)$ is a K\"ahler manifold and its Levi-Civita connection is symplectically flat. Then the Ricci curvature Ric$(u,v)$ satisfies
$$
\text{Ric}(u,v)=i\,\text{tr}_{\mathbb{C}}F(u,v)=\frac{1}{2}\omega(u,v)(\text{tr}\,J\Phi+i\,\text{tr}\,\Phi).
$$

By Theorem \ref{local constant} around every point in $M$, we can find some $g$ such that $g\Phi g^{-1}$ is a constant. That implies that $\text{tr}\,\Phi=\text{tr}\,g\Phi g^{-1}$ is a constant. For the same reason, $\text{tr}\,J\Phi$ is also a constant. Therefore, the Ricci tensor $r(u,v)$ has the following property:
$$
r(u,v)=\text{Ric}(-Ju,v)=c\,\omega(-Ju,v)=c\,g(u,v),
$$
where $c=\frac{1}{2}(\text{tr}\,J\Phi+i\,\text{tr}\,\Phi)$. In other words, we have
\begin{prop}
If the Levi-Civita connection of a K\"ahler manifold is symplectically flat, then the manifold is K\"ahler-Einstein.
\end{prop}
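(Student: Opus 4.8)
The plan is to extract the Einstein condition directly from the standard Kähler curvature identity, using Proposition \ref{local constant} to upgrade ``constant'' from local to global. First I would recall that on a Kähler manifold $(M,\omega,J,g)$ the Levi-Civita connection, viewed as a connection on the holomorphic tangent bundle $T^{1,0}M$, has curvature $F$ whose complex trace is the Ricci form: $\mathrm{Ric}(u,v)=i\,\mathrm{tr}_{\mathbb C}F(u,v)$. Feeding in the symplectic flatness hypothesis $F=\Phi\,\omega$ and expanding the complex trace of the real, $J$-commuting endomorphism $\Phi$ in terms of $J$, this becomes
$$\mathrm{Ric}(u,v)=\tfrac12\,\omega(u,v)\bigl(\mathrm{tr}\,J\Phi+i\,\mathrm{tr}\,\Phi\bigr),$$
the factor $\omega(u,v)$ pulling out precisely because $\Phi$ is a genuine degree-zero endomorphism rather than a two-form.

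Next I would invoke Proposition \ref{local constant}: around each point of $M$ there is a local frame of $T^{1,0}M$ in which $\Phi$ is represented by a constant matrix. Since the trace, and likewise the trace paired with $J$, is invariant under this change of frame, the functions $\mathrm{tr}\,\Phi$ and $\mathrm{tr}\,J\Phi$ are locally constant, hence (taking $M$ connected) globally constant. Therefore $c:=\tfrac12(\mathrm{tr}\,J\Phi+i\,\mathrm{tr}\,\Phi)$ is a constant, $\mathrm{Ric}=c\,\omega$, and passing to the Ricci $(0,2)$-tensor gives $r(u,v)=\mathrm{Ric}(-Ju,v)=c\,\omega(-Ju,v)=c\,g(u,v)$, which is exactly the Kähler--Einstein equation.

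The one place deserving attention is the normalization in the first step: getting correct how the complex trace of a real, $J$-commuting endomorphism decomposes as $\tfrac12(\mathrm{tr}\,J\Phi+i\,\mathrm{tr}\,\Phi)$, and observing that metric compatibility of the Levi-Civita connection forces $\Phi$ to be skew-symmetric, so that in fact $\mathrm{tr}\,\Phi=0$ and $c=\tfrac12\,\mathrm{tr}\,J\Phi$ is real --- consistent with $\mathrm{Ric}$ being a real $(1,1)$-form. Everything after that is a short conjugation-invariance-plus-connectedness argument, so I do not anticipate a genuine obstacle; the real content of the proposition is already carried by Proposition \ref{local constant}.
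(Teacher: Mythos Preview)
Your proposal is correct and follows essentially the same route as the paper: use the K\"ahler identity $\mathrm{Ric}=i\,\mathrm{tr}_{\mathbb C}F$, substitute $F=\Phi\,\omega$ to pull out the scalar $\tfrac12(\mathrm{tr}\,J\Phi+i\,\mathrm{tr}\,\Phi)$, invoke Proposition~\ref{local constant} plus conjugation-invariance of trace to make this scalar constant, and then pass from the Ricci form to the Ricci tensor via $r(u,v)=\mathrm{Ric}(-Ju,v)=c\,g(u,v)$. Your added remark that metric compatibility forces $\Phi$ skew-symmetric (hence $\mathrm{tr}\,\Phi=0$ and $c$ real) is a nice sanity check the paper omits, but it does not change the argument.
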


Symplectically flat bundles also have a simple alternative description.  When $\om$ is integral, it induces a flat connection on the prequantum circle bundle, i.e. the circle bundle over $M$ with Euler class given by $\om$.  

To show this, we recall a result of Tanaka-Tseng \cite{TT} that the primitive TTY algebra %of $F^0\Omega^*(M)=P^*(M)$ 
is $A_\infty$-quasi-isomorphic to the cone algebra $\mathcal{C}^*(M)=\Omega^*(M)\oplus\theta\,\Omega^*(M)$ where $d\theta=\omega$.  Furthermore, when $\omega$ is integral and we can consider a circle bundle $X$ over $M$ whose Euler class is $\omega$, then the de Rham DGA of the circle bundle $\Omega^*(X)$ is quasi-isomorphic to both the $\mathcal{C}^*(M)$ algebra and the primitive TTY algebra. %$F^{0}\Omega^*(M)=P^*(M)$

%result of the following theorem from \cite{TT}:
%\begin{thm}[TT \cite{TT}]\label{TTY to circle bundle}
%Let $(M,\omega)$ be a symplectic manifold. Its filtered complex $F^p\Omega^*(M)$ is $A_\infty$-quasi-isomorphic to the cone algebra $\mathcal{C}^*_p(M)=\Omega^*(M)\oplus\theta\Omega^*(M)$ where $d\theta=\omega^{p+1}$.

%Moreover, when $\omega$ is integral. There exists an $S^{2p+1}$-bundle $X$ over $M$ whose Euler class is $[\omega^{p+1}]$. $\Omega^*(X)$ is also quasi-isomorphic to these $A_\infty$-algebras.
%\end{thm}

We can extend these quasi-isomorphism relations between algebras to include symplectically flat connections. Let $E$ be a vector bundle over $M$ with a connection, and $d_A$ the corresponding covariant derivative. On the twisted cone algebra $\mathcal{C}^*(M,E)=\Omega^*(M,E)\oplus\theta\,\Omega^*(M,E)$ with $d\theta=\om$, we define the operator
$$
D_{\mathcal{C}}=d_A-\theta\,\Phi.
$$

\begin{prop}\label{coneflat}
The above connection is symplectically flat if and only if $D_{\mathcal{C}}^2=0$.

%When $\zeta$ is integral, this is equivalent to $D_X$ is a flat connection on $X\times_M E$.
\end{prop}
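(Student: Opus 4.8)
The plan is to expand $D_{\mathcal{C}}^2$ directly on a general element $\eta + \theta\,\xi \in \mathcal{C}^*(M,E)$ and match the result against the symplectic flatness conditions. First I would record the two structural facts needed: $d\theta = \omega$ (so $d_A(\theta\,\xi) = \omega\wedge\xi - \theta\wedge d_A\xi$, with the sign depending on the chosen convention), and the Leibniz rule for $d_A$ together with $(d_A)^2 = F\wedge(-)$. Using $D_{\mathcal{C}} = d_A - \theta\,\Phi$, I would compute
\begin{align*}
D_{\mathcal{C}}^2 &= (d_A - \theta\,\Phi)(d_A - \theta\,\Phi) \\
&= (d_A)^2 - d_A(\theta\,\Phi\,-) - \theta\,\Phi\, d_A(-) + \theta\,\Phi\,\theta\,\Phi\,(-).
\end{align*}
The last term vanishes because $\theta\wedge\theta = 0$. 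Expanding $d_A(\theta\,\Phi\,\sigma)$ via Leibniz produces a term $(d\theta)\,\Phi\,\sigma = \omega\,\Phi\,\sigma$, a term $\theta\,(d_A\Phi)\,\sigma$, and a term $\pm\theta\,\Phi\,d_A\sigma$ which cancels against $\theta\,\Phi\,d_A(-)$ up to sign bookkeeping.

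After the dust settles, $D_{\mathcal{C}}^2$ should collapse to an operator of the form $(F - \omega\,\Phi)\wedge(-) \;-\; \theta\,(d_A\Phi)\wedge(-)$, acting on each of the two summands $\Omega^*(M,E)$ and $\theta\,\Omega^*(M,E)$. The key observation is that the $\theta$-free part of $D_{\mathcal{C}}^2$ is multiplication by $F - \omega\,\Phi = F_0$ (the primitive part of the curvature, in the notation of \eqref{dAs2}), while the coefficient of $\theta$ is multiplication by $d_A\Phi$. Hence $D_{\mathcal{C}}^2 = 0$ identically if and only if both $F_0 = 0$ (equivalently $F = \Phi\,\omega$) and $d_A\Phi = 0$, which is precisely Definition \ref{defsf}. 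One direction is immediate; for the converse, one evaluates $D_{\mathcal{C}}^2$ on elements $\eta$ with $\theta$-component zero to isolate $F_0 = 0$ (note $F_0$ is a two-form, distinguishable from the $\omega\,\Phi$ term since $F_0$ is primitive), and then on a suitable $\eta$ to force $d_A\Phi = 0$; alternatively, once $F_0 = 0$ is known, the $\theta\,(d_A\Phi)$ term is the only obstruction left.

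I expect the only real subtlety to be sign conventions: the sign in $d_A(\theta\,\xi)$, the sign picked up when $\Phi$ (a $0$-form, hence central in the form-degree grading but not in the $\mathrm{End}\,E$ factor) commutes past $d_A$, and consistency with the convention for $\theta$ in the cone algebra $\mathcal{C}^*(M)$ as used in \cite{TT}. These need to be fixed once at the outset and tracked carefully, but they do not affect the final characterization. A secondary point worth a sentence is that $\Phi$ here is genuinely an $\mathrm{End}\,E$-valued function, so $d_A\Phi$ means $d\Phi + [A,\Phi]$ and the cancellation of the $\theta\,\Phi\,d_A\sigma$ terms uses the graded Leibniz rule for $d_A$ on the tensor product $\mathrm{End}\,E \otimes E$; this is routine but should be stated. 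No separate hard lemma is needed — unlike Proposition \ref{local constant}, this statement is a one-line computation in disguise.
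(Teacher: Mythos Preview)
Your proposal is correct and follows essentially the same approach as the paper. The paper's proof is slightly more compact: it passes to a local trivialization so that $D_{\mathcal{C}} = d + (A - \theta\Phi)$, and then reads off $D_{\mathcal{C}}^2$ as the curvature form $d(A-\theta\Phi) + (A-\theta\Phi)\wedge(A-\theta\Phi) = (F - \Phi\omega) + \theta(d\Phi + [A,\Phi])$, rather than applying the operator to a test element; but this is the same computation you describe, just packaged differently.
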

\begin{proof}
Write $d_A=d+A$ locally, then $D_\mathcal{C}=d+A-\theta\Phi$.
\begin{align*}
& D_{\mathcal{C}}^2=d(A-\theta\Phi)+(A-\theta\Phi)\w(A-\theta\Phi)=0 \\
\iff \quad & (dA+A\wedge A-\Phi\om)+\theta(d\Phi+[A,\Phi])=0 \\
\iff \quad &
\begin{cases}
F=dA+A\wedge A=\Phi\om \\
d_A\Phi=d\Phi+[A,\Phi]=0
\end{cases}
\end{align*}
\end{proof}

\begin{cor}
Suppose $\om\in \Omega^2(M)$ is an integral closed 2-form on a manifold $M$, and $\pi:X\to M$ be the circle bundle whose Euler class is $\om$. If $E$ is a symplectically flat bundle over $M$, then $\pi^*E$ is a flat bundle over $X$.
\end{cor}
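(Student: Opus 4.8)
The plan is to deduce the corollary directly from Proposition \ref{coneflat} together with the quasi-isomorphism circle of Tanaka--Tseng. First I would recall the geometric picture of the prequantum circle bundle $\pi:X\to M$: since the Euler class of $X$ is $[\om]$, there is a connection $1$-form $\th$ on $X$ (a global $1$-form after choosing a connection) with $d\th = \pi^*\om$; locally $\th$ plays exactly the role of the formal variable $\th$ in the cone algebra $\CC^*(M)=\Om^*(M)\oplus\th\,\Om^*(M)$. The key point is that pulling back forms along $\pi$ identifies $\Om^*(X)$ (at least the part generated over $\Om^*(M)$ by $1$ and $\th$, which via a Gysin/Cartan-model argument is all of $\Om^*(X)$) with $\CC^*(M)$, intertwining $d_X$ with the cone differential $d + \th\wedge(-)$ built from $d\th=\om$.

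Next I would carry this identification over to the twisted setting. Let $E$ be symplectically flat over $M$ with connection $d_A$ and $F=\Phi\,\om$, $d_A\Phi=0$. Pull everything back to $X$: $\pi^*E$ carries the pulled-back connection $d_{\pi^*A}$, and $\pi^*\Phi$ is a section of $\mathrm{End}(\pi^*E)$. Under the identification $\Om^*(X,\pi^*E)\cong\CC^*(M,E)=\Om^*(M,E)\oplus\th\,\Om^*(M,E)$, I claim the covariant derivative $d_{\pi^*A}$ on $X$ corresponds precisely to the operator $D_\CC = d_A - \th\,\Phi$ on the cone, \emph{up to a gauge transformation / change of trivialization on $X$}. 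The heuristic is exactly the ``special local frame'' of Table~\ref{Table1} and Proposition~\ref{local constant}: locally $A = \Phi\lambda$ with $d\lambda=\om$, and on $X$ one can absorb the fiber coordinate so that the connection becomes $d + \Phi(\lambda - \th_{\mathrm{loc}})$ where $\lambda-\th_{\mathrm{loc}}$ is (locally) closed — i.e.\ the $-\th\Phi$ term is what the connection looks like once written against the pullback frame. Then $D_\CC^2=0$ by Proposition \ref{coneflat} is exactly the statement that $(d_{\pi^*A})^2 = \pi^*F - (\text{correction}) = 0$ on $X$, i.e.\ $\pi^*E$ is flat.

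A cleaner, more self-contained way to finish — and the one I would actually write — avoids invoking the full $A_\infty$-quasi-isomorphism machinery and instead does the computation on $X$ directly. Cover $M$ by trivializing opens $U_\al$ on which $X$ is $U_\al\times S^1$ with fiber coordinate $t_\al$, and $\th = dt_\al + \pi^*\mu_\al$ where $d\mu_\al = \om$ on $U_\al$. Over $U_\al$ define the new local frame for $\pi^*E$ obtained from $\pi^*(\text{frame of }E|_{U_\al})$ by the gauge transformation $g_\al = \exp(-t_\al\,\Phi_\al)$, where $\Phi_\al$ is the local matrix of $\Phi$; this makes sense because $\Phi_\al$ is covariantly constant, hence (by Proposition~\ref{local constant}) may be taken constant in a suitable frame, so $\exp(t_\al\Phi_\al)$ is well-defined and the transition data still glue. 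In this frame the connection form becomes
\begin{align*}
\tilde A_\al = g_\al (\pi^*A_\al) g_\al^{-1} + g_\al\, d g_\al^{-1} = \pi^*A_\al - \Phi_\al\, dt_\al + (\text{lower order in }t_\al),
\end{align*}
and a direct curvature computation, using $\pi^*F_\al = \Phi_\al\,\pi^*\om = \Phi_\al\, d\mu_\al$ and $d(dt_\al)=0$, gives $d\tilde A_\al + \tilde A_\al\wedge\tilde A_\al = \pi^*F_\al - d(\Phi_\al\,\th) = \Phi_\al\,\pi^*\om - \Phi_\al\,d\th = 0$ since $d\th=\pi^*\om$; here the steps mirror exactly the display in the proof of Proposition~\ref{coneflat} with $\th$ now an honest $1$-form on $X$ rather than a formal symbol. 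Hence $\pi^*E$ admits a flat connection and is a flat bundle.

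The main obstacle is the bookkeeping around the gauge transformation $g_\al=\exp(-t_\al\Phi_\al)$: one must check it is globally well-defined on the $S^1$-fibers (it is, since $\om$ integral forces, after the quasi-isomorphism identification, the monodromy $\exp(-2\pi i\,\Phi)$-type factor to be absorbed consistently — equivalently one simply works with the connection $d_{\pi^*A} - \th\,\Phi$, which by Proposition~\ref{coneflat} is visibly flat and globally defined since $\th$ and $\Phi$ are global on $X$), that the transition functions of $\pi^*E$ are unchanged by conjugation by $g_\al$ (they are, because $\Phi$ commutes with them up to the covariant-constancy condition), and that the correction terms in $\tilde A_\al$ genuinely drop out of the curvature. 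Once one notices that the truly global object is just $D := d_{\pi^*A} - \th\,\Phi$ on $X$ — with $\th\in\Om^1(X)$, $\Phi\in\Om^0(X,\mathrm{End}\,\pi^*E)$ both global — the corollary is immediate: $D$ is a connection on $\pi^*E$ and $D^2 = \pi^*F - \pi^*(d_A\Phi)\wedge\th - \Phi\,d\th = \Phi\,\pi^*\om - 0 - \Phi\,\pi^*\om = 0$, which is precisely the content of Proposition~\ref{coneflat} read on $X$ instead of on the formal cone.
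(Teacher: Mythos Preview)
Your final argument---defining the global connection $D=d_{\pi^*A}-\th\,\pi^*\Phi$ on $\pi^*E$ using an actual connection $1$-form $\th\in\Om^1(X)$ with $d\th=\pi^*\om$, and computing $D^2=\pi^*F-\Phi\,d\th+\th\w\pi^*(d_A\Phi)=0$---is correct and is precisely the paper's intended proof: it is Proposition~\ref{coneflat} read on $X$ with $\th$ no longer formal. The earlier local gauge-transformation route via $g_\al=\exp(-t_\al\Phi_\al)$ that you explored does have the genuine obstruction you suspected (that map is single-valued on the $S^1$ fiber only when $\exp(2\pi\Phi_\al)=I$, which is not implied by symplectic flatness), so you were right to drop it in favor of the global construction.
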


%\begin{cor}
%Suppose $\om\in\Omega^2(M)$ is an integral closed 2-form on a manifold $M$.  Let $\pi:X\to M$ be the circle bundle whose Euler class is $\om$, and $P$ be another circle bundle over $M$. Then $P$ is symplectically flat if and only if $\pi^*P$ is flat.
%\end{cor}

%\noindent \textit{Proof.} We can find a global angular form $\theta$ over $X$ such that $d\theta=\om$ (c.f. \cite{Bott-Tu}). When $d_A$ is the covariant derivative of symplectically-flat connection over $E$ with curvature $\Phi\om$, we have proved that $D_{\mathcal{C}}=d_A-\theta\Phi$ is a flat connection over $\pi^*E$.

%Conversely, when $D_{\mathcal{C}}$ is the covariant derivative of a flat connection over $\pi^*E$, it can be represented by a closed $1$-form over $\pi^*E$ because it is rank $1$. Let $dz$ be the $1$-form on $X$ whose direction is along the fiber $S^1$, and $\int_{S^1}dz=1$ on all the fibers. Then let $\bar{D}_{\mathcal{C}}=\int_{S^1}D_{\mathcal{C}}\wedge dz$. $\bar{D}_{\mathcal{C}}$ is invariant along fibers, gauge equivalent to $A_X$ and still closed. So we can write $\bar{D}_{\mathcal{C}}$ as $\alpha+\Phi\,\theta$, where $\alpha\in\Omega^1(M)$ and $\Phi\in\Omega^0(M)$.

%Since
%$$
%0=dA_X+A_X\wedge A_X
%=dA_X=d\alpha+\Phi\,\om-d\Phi\wedge\theta,
%$$
%and $d\alpha+\Phi\,\om\in \Omega^2(M)$, $d\Phi$ must be $0$. This implies $\Phi$ is a constant and $d\alpha=-\Phi\,\om$. Thus, $\alpha$ can be viewed as a symplectically-flat connection $1$-form over $M$.
%\qed

\section{Calculation of twisted cohomology}
In this section, we consider a vector bundle $E$ over a symplectic manifold $(M,\omega)$ with a symplectically flat connection whose curvature $F= \Phi\, \om$.  The twisted primitive elliptic complex \eqref{tprimcomplex} gives the following cohomologies:
%\begin{align*}
%PH(M,E) = \left\{PH^0_+, PH^1_+, \ldots, PH^n_+, PH^n_-, \ldots, PH^1_-, PH^0_-\right\}
%\end{align*}
%where 
\begin{align*}
PH_+^k(M, E) &= \dfrac{\ker \dpa}{\im \dpa}\, \qquad\qquad PH_+^n(M,E)=\dfrac{\ker (\dpa\dma -\Phi)}{\im \dpa}\\
PH_-^k(M, E) &= \dfrac{\ker \dma}{\im \dma}\, \qquad\qquad PH_-^n(M,E)=\dfrac{\ker \dma}{\im (\dpa\dma -\Phi)}
\end{align*}
where  $k=0, \ldots, n-1$.  To simplify notation below, we use $m'_1$ as defined in \eqref{mpdef} to denote the differentials $\{\dpa, -(\dpa\dma -\Phi), -\dma\}$ in the complex \eqref{tprimcomplex}.

By \eqref{dpmrelb}, we obtain a condition on the elements of the $PH^*(M,E)$ cohomology.
\begin{lem}\label{Phimap}
Let $\beta \in PH^*(M,E)$.  Then $\Phi \beta$ is trivial in $PH^*(M,E)$ cohomology class.  Specifically, for $\be_k\in P^k_+(M,E)$ and $0\leq k \leq n$, 
\begin{align}\label{phipt}
\Phi\be_k = \dpa\dma \be_k,
\end{align}
and for $\bbe_k\in P^k_-(M,E)$, 
\begin{align}\label{phimt}
\Phi\bbe_k = 
\begin{cases}
\dma\dpa \bbe_k, & 0 \leq k < n \\
(-\dpa\dma + \Phi)\be_k, &  k=n 
\end{cases}
\end{align}
\end{lem}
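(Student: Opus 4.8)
The plan is to reduce the entire statement to the operator identity hidden inside \eqref{dpmrelb} and then read off the cases. Although \eqref{dpmrelb} only asserts $\om\w(\dpa\dma+\dma\dpa-\Phi)\beta=0$, the Lefschetz map $L=\om\w:P^k(M,E)\to\Om^{k+2}(M,E)$ is injective for every $0\le k\le n-1$, so on primitive forms of degree strictly below $n$ we recover the genuine identity $(\dpa\dma+\dma\dpa)\beta=\Phi\,\beta$. In top degree this says nothing, since $\om\w P^n=0$; that is precisely why the $k=n$ cases have to be handled separately, directly from the definitions of the two differentials $\pm(\dpa\dma-\Phi)$ sitting at the fold of \eqref{tprimcomplex}.

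With this in hand I would argue case by case. If $\be_k\in P^k_+(M,E)$ with $k<n$ is a cocycle, then $\dpa\be_k=0$, so the identity collapses to $\Phi\be_k=\dpa\dma\be_k=\dpa(\dma\be_k)\in\im\dpa$; for $k=0$ one moreover has $\dma\be_0=0$, giving $\Phi\be_0=0$. If $\be_n\in P^n_+(M,E)$ is a cocycle, then lying in $\ker(\dpa\dma-\Phi)$ is literally the equation $\Phi\be_n=\dpa\dma\be_n$, and again $\dpa\dma\be_n=\dpa(\dma\be_n)\in\im\dpa$. If $\bbe_k\in P^k_-(M,E)$ with $k<n$ is a cocycle, then $\dma\bbe_k=0$, so the identity collapses the other way to $\Phi\bbe_k=\dma\dpa\bbe_k=(-\dma)(-\dpa\bbe_k)\in\im(-\dma)$. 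Finally, if $\bbe_n\in P^n_-(M,E)$ is a cocycle, then $\dma\bbe_n=0$, whence $\dpa\dma\bbe_n=0$ and $(-\dpa\dma+\Phi)\bbe_n=\Phi\bbe_n$, which exhibits $\Phi\bbe_n$ in the image of the differential $-\dpa\dma+\Phi:P^n_+(M,E)\to P^n_-(M,E)$. In each case the right-hand side is by construction the image under $m'_1$ of a chain, so $\Phi\beta$ is a coboundary; this proves the displayed formulas \eqref{phipt} and \eqref{phimt} and with them the triviality of $[\Phi\beta]$ in $PH^*(M,E)$.

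The only place that genuinely needs care is the top-degree bookkeeping: there \eqref{dpmrelb} degenerates, so one cannot simply quote the operator identity and must instead argue from the precise shape of the differentials and of the cocycle conditions defining $PH^n_\pm(M,E)$. Everything else is a single substitution, requiring no computation beyond the one that already produced \eqref{dpmrela}--\eqref{dpmrelb}.
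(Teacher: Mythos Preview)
Your proposal is correct and follows essentially the same approach as the paper: use the injectivity of $L=\om\wedge$ on $P^k$ for $k<n$ to strip the $\om$ from \eqref{dpmrelb}, then impose the cocycle condition in each of the four cases, handling the top-degree cases directly from the definition of $m'_1$ at the fold of the complex. Your write-up is somewhat more explicit about why each right-hand side lands in the image of the appropriate differential, but the argument is identical in substance.
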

\begin{proof}
For $k < n$, note that \eqref{dpmrelb} simplifies to just $\ \dpa\dma +\dma\dpa=\Phi\ $ as $\om$ is an injective map when acting on forms of degree less than $n$.  Using this, \eqref{phipt} and \eqref{phimt} then follow immediately after imposing $m_1'(\be_k)=\dpa\be_k = 0$ or $m_1'(\bbe_k)=-\dma\bbe_k=0$.  When $k=n$, \eqref{phipt} is just $m_1'(\be_n)= 0$ and \eqref{phimt} is also trivial since $m_1'(\bbe_n)=-\dma \bbe_n=0$. 
\end{proof}

\begin{thm}\label{vanP}
If $\Phi$ is invertible, then $PH^*(M,E)=0$.
\end{thm}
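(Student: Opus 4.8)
The plan is to show that if $\Phi$ is invertible, then every cocycle in the twisted primitive complex is a coboundary, using Lemma~\ref{Phimap} together with the relation $\dpa\dma + \dma\dpa = \Phi$ (which holds on $P^k$ for $k < n$) and the endpoint identities built into the complex. The key observation is that Lemma~\ref{Phimap} already writes $\Phi\beta$ as an explicit coboundary for any cocycle $\beta$; since $\Phi$ is invertible and covariantly constant, composing with $\Phi^{-1}$ should let us express $\beta$ itself as a coboundary, provided $\Phi^{-1}$ commutes appropriately with the twisted differentials.

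First I would check that $\Phi^{-1}$ commutes with $\dpa$ and $\dma$. Since $d_A\Phi = 0$, differentiating $\Phi\Phi^{-1} = \mathrm{id}$ gives $d_A(\Phi^{-1}) = 0$ as well, so $\Phi^{-1} \in \Om^0(M,\mathrm{End}\,E)$ is also covariantly constant. A scalar-degree covariantly constant endomorphism commutes with $d_A$ acting on $E$-valued forms, and hence (applying $\Pi$ and $L^{-1}$, which are $\CO$-linear and unaffected by the endomorphism action) with both $\dpa$ and $\dma$, and with $\dpa\dma - \Phi$. Then for a cocycle $\be_k \in P^k_+(M,E)$ with $k < n$, Lemma~\ref{Phimap} gives $\Phi\be_k = \dpa\dma\be_k$, so $\be_k = \Phi^{-1}\dpa\dma\be_k = \dpa(\Phi^{-1}\dma\be_k)$, exhibiting $\be_k$ as a $\dpa$-coboundary; thus $PH^k_+(M,E) = 0$. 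The same manipulation with \eqref{phimt} handles $\bbe_k \in P^k_-(M,E)$ for $k < n$: $\bbe_k = \Phi^{-1}\dma\dpa\bbe_k = -\dma(-\Phi^{-1}\dpa\bbe_k)$, so $PH^k_-(M,E) = 0$.

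For the two middle cohomologies at degree $n$, I would argue similarly but using the endpoint differential $-\dpa\dma + \Phi$. If $\be_n \in P^n_+(M,E)$ satisfies $(\dpa\dma - \Phi)\be_n = 0$, then $\Phi\be_n = \dpa\dma\be_n = \dpa(\Phi^{-1}\Phi\dma\be_n)$; writing $\gamma = \Phi^{-1}\dma\be_n \in P^{n-1}(M,E)$ we get $\be_n = \Phi^{-1}\dpa\dma\be_n = \dpa\gamma$, which is in the image of $\dpa : P^{n-1}_+ \to P^n_+$, so $PH^n_+(M,E) = 0$. For $\bbe_n \in P^n_-(M,E)$ with $-\dma\bbe_n = 0$, \eqref{phimt} gives $\Phi\bbe_n = (-\dpa\dma + \Phi)\bbe_n$; hence $\bbe_n = \Phi^{-1}(-\dpa\dma+\Phi)\bbe_n = (-\dpa\dma+\Phi)(\Phi^{-1}\bbe_n)$, which lies in the image of the endpoint map $-\dpa\dma+\Phi : P^n_+ \to P^n_-$, so $PH^n_-(M,E) = 0$.

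The main obstacle is purely bookkeeping: making sure that $\Phi^{-1}$ genuinely commutes past $\dpa$ and $\dma$ — i.e. that the covariant-constancy argument goes through after applying the Lefschetz projectors, and that at degree $n$ the rewritten coboundary really lands in the correct term of the complex (the vertical map versus the horizontal maps). I expect no deeper difficulty, since Lemma~\ref{Phimap} has done the real work; the theorem is essentially the statement that multiplication by $\Phi$ is null-homotopic on $PH^*(M,E)$, and invertibility of $\Phi$ then forces the cohomology to vanish.
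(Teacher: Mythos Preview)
Your proposal is correct and follows essentially the same approach as the paper: establish $d_A\Phi^{-1}=0$ from $d_A\Phi=0$, deduce that $\Phi^{-1}$ commutes with $\dpa$, $\dma$, and hence with $m'_1$, and then combine this with Lemma~\ref{Phimap} to conclude every cocycle is a coboundary. The only cosmetic difference is that the paper argues uniformly---observing that if $m'_1\beta=0$ then $m'_1(\Phi^{-1}\beta)=0$, and applying Lemma~\ref{Phimap} to $\Phi^{-1}\beta$ directly to get $\beta=\Phi(\Phi^{-1}\beta)$ exact---whereas you apply Lemma~\ref{Phimap} to $\beta$ first and then commute $\Phi^{-1}$ through the explicit coboundary in each of the four cases; these are the same argument in a different order.
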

\begin{proof}
Let $\Phi^{-1}$ be the inverse of $\Phi$.  We note that $d_A \Phi=0$ implies $d_A \Phi^{-1} = 0$ since
\begin{align*}
0=d_A \left(\Phi \Phi^{-1}\right)& = d\Phi\, \Phi^{-1} + \Phi \, d\Phi^{-1} + [A, \Phi]\,\Phi^{-1} + \Phi\, [A, \Phi^{-1}]
= \Phi\, d_A\Phi^{-1} \,.
\end{align*} 
Therefore, for arbitrary $\alpha\in\Omega^*(M,E)$, we have that
\begin{align*}
%d_A(\Phi^{-1}\alpha)=
(d+A)(\Phi^{-1}\alpha)= \Phi^{-1}(d+A)\alpha 
%= \Phi^{-1}d_A \alpha
\end{align*} 
It follows that both $\partial_{+A}$ and $\partial_{-A}$ also commute with $\Phi^{-1}$. So for any $\beta\in PH^*_{\pm}(M,E)$, if $m'_1\beta=0$, then $m'_1(\Phi^{-1}\beta)=\Phi^{-1}m'_1\beta=0$. By Lemma \ref{Phimap}, $\beta=\Phi(\Phi^{-1}\beta)$ must be $m'_1$-exact. 

%At every $x\in M$, there is a neighborhood $U$ of $x$ and local trivialization $\sigma:E|_U\to U\times V$ such that $\sigma^*\Phi:U\to \mathrm{End}(V)$ is a constant. For simplicity, we will just use $\Phi$ to denote $\sigma^*\Phi$. In this trivialization, $D$ is represented as $d+A$ for some $A\in\Omega^1(U,End(V))$. Since $D\Phi=0$, we have $d\Phi+[A,\Phi]=0$, then $A\Phi=\Phi A$. Thus, $A\Phi^{-1}=\Phi^{-1}A$ and so
%$$
%(d+A)(\Phi^{-1}\alpha)=\Phi^{-1}(d+A)\alpha.
%$$
%This implies $D(\Phi^{-1}\alpha)=\Phi^{-1}D\alpha$ on $U$. Since $x$ is arbitrary, this equation holds globally. It follows that both $\partial_{+A}$ and $\partial_{-A}$ commute with $\Phi^{-1}$. So for any $\beta\in PH^*_{\pm}$, if $m'_1\beta=0$, then $m'_1(\Phi^{-1}\beta)=\Phi^{-1}m'_1\beta=0$. By Corollary \ref{PhiT}, $\beta=\Phi(\Phi^{-1}\beta)$ must be $m'_1$-exact. 
\end{proof}
\begin{cor}
When $\mathrm{rank}~E = 1$ and $E$ is non-flat, then $PH^*(M,E)=0$.
\end{cor}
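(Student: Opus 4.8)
The plan is to deduce this corollary directly from Theorem \ref{vanP} by showing that, for a line bundle, the hypothesis ``$E$ non-flat'' forces $\Phi$ to be invertible. First I would observe that when $\mathrm{rank}~E = 1$, the endomorphism bundle $\mathrm{End}\,E$ is canonically trivial, so the curvature $F$ is an ordinary (scalar-valued) closed $2$-form and $\Phi \in \Omega^0(M)$ is an ordinary function. The symplectic flatness condition $F = \Phi\,\omega$ together with $d_A\Phi = d\Phi = 0$ (the commutator term vanishes in the rank-one case) says precisely that $\Phi$ is a constant $c \in \mathbb{R}$ (on each connected component of $M$), and hence $F = c\,\omega$.

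Next I would argue that $c \neq 0$. If $c = 0$ then $F = 0$, so the connection is flat and $E$ is a flat line bundle, contradicting the assumption that $E$ is non-flat. Therefore $\Phi = c$ with $c \neq 0$, which is an invertible element of $\Omega^0(M,\mathrm{End}\,E)$. Applying Theorem \ref{vanP} then gives $PH^*(M,E) = 0$, completing the argument. One should take a moment to note what ``$E$ non-flat'' means here: it is the statement that $E$ does not merely admit \emph{some} flat connection, but rather that the \emph{given} symplectically flat connection is not flat; since on a line bundle admitting a symplectically flat connection the curvature is $c\,\omega$ for a single constant $c$, non-flatness of this connection is equivalent to $c \neq 0$, so there is no ambiguity to worry about.

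The only subtlety — and it is minor — is the possibility that $M$ is disconnected, in which case $\Phi$ could be a different constant on different components; but then invertibility of $\Phi$ is equivalent to $c \neq 0$ on every component, and ``$E$ non-flat'' applied componentwise handles this, or one simply assumes $M$ connected as is standard. I do not expect any real obstacle: the entire content is the reduction ``rank one $+$ symplectically flat $\Longrightarrow \Phi = \mathrm{const}$,'' which is immediate from $d\Phi = 0$, followed by a direct invocation of the vanishing theorem.
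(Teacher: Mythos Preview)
Your proposal is correct and matches the paper's intended reasoning: the paper states this corollary immediately after Theorem~\ref{vanP} with no separate proof, so the implicit argument is exactly the one you give---in rank one, $\Phi$ is a locally constant scalar, non-flatness forces $\Phi\neq 0$, and Theorem~\ref{vanP} applies. Your added remarks on the meaning of ``non-flat'' and the disconnected case are reasonable caveats that the paper simply does not address.
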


The above Theorem \ref{vanP} is a vanishing statement for $PH^*(M,E)$ of which $\Phi$ plays a central role.

\subsection{Local cohomologies}
For arbitrary $x\in M$, we have a neighborhood $U$ of $x$ isomorphic to $\mathbb{R}^{2n}$ such that $E|_U\simeq U\times V$. There exist $\lambda\in\Omega^1(U)$ such that $d\lambda=\omega$. According the proof of Proposition \ref{local constant}, we can find a frame $\{e_i\}$ on $E|_U$ such that $Ae_i=\Phi\lambda e_i$ and $\Phi\in \text{End }V$ is a constant.  Locally, we have $A=\Phi \lam$ and we obtain the following result regarding the twisted primitive cohomologies:
\begin{thm}\label{localP}
\begin{align*}
PH^k_+(U,E)&=
\begin{cases}
\ker \Phi,& k=0 \\
\lambda\ \mathrm{coker}\,\Phi,& k=1 \\
0,& k\geq 2
\end{cases}
\\  PH^k_-(U,E)&=0\,.
\end{align*}
Here, $\ker \Phi$ and  $\,\mathrm{coker}\,\Phi$ are subspaces of $V$ and can be represented by constant sections. %i.e. $PH^0_+(U,E),PH^1_+(U,E)$ can be represented by constants.
\end{thm}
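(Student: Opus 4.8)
\textbf{Proof proposal for Theorem \ref{localP}.}

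The plan is to reduce the computation on $U\simeq\mathbb{R}^{2n}$ with $A=\Phi\lambda$ (and $\Phi\in\operatorname{End}V$ constant, $d\lambda=\omega$) to the untwisted primitive cohomology of $\mathbb{R}^{2n}$, which is already known. First I would observe that since $\Phi$ is a constant endomorphism, we may pass to a basis of $V$ in which $\Phi$ is in Jordan form, so that it suffices to treat a single Jordan block; and by decomposing further, the essential cases are (i) $\Phi$ acting as a nonzero scalar $c\,\mathbf{1}$ (the invertible, diagonalizable piece), (ii) $\Phi$ nilpotent (in particular $\Phi=0$, the ordinary untwisted case), and (iii) a genuine Jordan block $c\,\mathbf{1}+N$ with $c\neq 0$. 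For case (ii) with $\Phi=0$, the twisted differentials $\dpa,\dma$ reduce to the untwisted $\dpp,\dpm$ and one cites the known computation that on $\mathbb{R}^{2n}$ the primitive complex \eqref{primcomplex} has cohomology $\mathbb{R}$ in degree $PH^0_+$ and $\mathbb{R}$ in degree $PH^1_+$ (represented by $\lambda$) and vanishes otherwise — matching $\ker\Phi=V$, $\operatorname{coker}\Phi=V$ in this case. If $\Phi$ is invertible on a summand, Theorem \ref{vanP} already kills the cohomology there, so $\ker\Phi$ and $\operatorname{coker}\Phi$ contribute nothing, again matching the statement. The remaining work is to handle the nilpotent and mixed Jordan-block contributions uniformly.

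The cleaner route, which I would actually carry out, avoids case analysis by using the cone-algebra comparison of Proposition \ref{coneflat} together with the quasi-isomorphism of Tanaka–Tseng. Namely, on $U$ the twisted primitive complex $P^*(U,E)$ computes the same cohomology as the twisted cone complex $(\mathcal{C}^*(U,E),D_{\mathcal{C}})$ with $D_{\mathcal{C}}=d_A-\theta\Phi=d+\Phi\lambda-\theta\Phi=d+\Phi(\lambda-\theta)$. Setting $\mu=\lambda-\theta$, which satisfies $d\mu=\omega-\omega=0$, we see $D_{\mathcal{C}}=d+\Phi\,\mu$ acting on $\Omega^*(U)\oplus\theta\,\Omega^*(U)\otimes V$, a Koszul-type twisted de Rham differential on a contractible space with a \emph{closed} connection form $\Phi\mu$. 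Since $U\simeq\mathbb{R}^{2n}$ is contractible and $\Phi\mu$ is closed, a standard gauge argument (or explicit homotopy: conjugate by $\exp(-\Phi f)$ where $df=\mu$ when $\mu$ is exact, which it is on $\mathbb{R}^{2n}$) identifies the cohomology of $D_{\mathcal{C}}$ with the de Rham cohomology of the cone $\mathcal{C}^*(\mathbb{R}^{2n})$ tensored appropriately with the kernel/cokernel of $\Phi$. One then reads off, from the two-term structure $\mathcal{C}^*=\Omega^*(\mathbb{R}^{2n})\oplus\theta\,\Omega^*(\mathbb{R}^{2n})$ and the known $H^*_{dR}(\mathbb{R}^{2n})=\mathbb{R}$ in degree $0$, that the resulting cohomology is $\ker\Phi$ in the degree-$0$ slot and $\operatorname{coker}\Phi$ in the degree-$1$ slot (the $\theta$-component contributing the shift), with everything else vanishing; translating back through the quasi-isomorphism to $PH^*$ and tracking gradings yields exactly $PH^0_+=\ker\Phi$, $PH^1_+=\lambda\operatorname{coker}\Phi$, and all higher $PH^k_\pm=0$.

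Alternatively, and perhaps most transparently for a self-contained argument, I would simply diagonalize/Jordan-decompose $\Phi$ and construct explicit homotopy operators on the primitive complex directly. On the invertible summand, multiplication by $\Phi^{-1}$ composed with Lemma \ref{Phimap} gives a contracting homotopy (this is the content of Theorem \ref{vanP}). On the $\Phi=0$ summand one invokes the known untwisted primitive cohomology of $\mathbb{R}^{2n}$. The only subtlety is the nilpotent-but-nonzero interaction inside a Jordan block with nonzero eigenvalue: here I would argue that since $\Phi=c\mathbf{1}+N$ with $c\neq0$ is still invertible, Theorem \ref{vanP} applies verbatim — so in fact the \emph{only} surviving contributions come from the genuine kernel of $\Phi$, on which $\Phi$ acts as zero, and from the cokernel, and both reduce to the untwisted $\mathbb{R}^{2n}$ computation. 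This makes clear why $\ker\Phi$ and $\operatorname{coker}\Phi$, represented by constant sections, are the whole answer.

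The main obstacle I anticipate is purely bookkeeping rather than conceptual: carefully matching the gradings of the cone complex (or of the explicit homotopies) with the doubled grading of the primitive complex \eqref{Fdef}, in particular verifying that the degree-$1$ representatives genuinely take the form $\lambda\cdot v$ with $v\in\operatorname{coker}\Phi$ a constant section (as opposed to landing in the $P^*_-$ half or in higher degree), and confirming that no cohomology leaks into the bottom vertex $PH^n_-$ or the $\Phi$-twisted middle differential $-\dpa\dma+\Phi$. I would resolve this by writing out the complex explicitly in low dimensions ($n=1$, then $n=2$) to fix signs and gradings, and then invoking the quasi-isomorphism of Tanaka–Tseng to propagate the answer to all $n$.
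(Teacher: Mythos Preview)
Your overall strategy is reasonable, but there are two genuine gaps.

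First, in your cone-complex approach the key step fails: $\mu=\lambda-\theta$ is closed in $\mathcal{C}^*(U)$ but it is \emph{not} exact. Any $f\in\mathcal{C}^0(U)=\Omega^0(U)$ has $D_{\mathcal{C}}f=df\in\Omega^1(U)$, which carries no $\theta$-component, so $df$ can never equal $\lambda-\theta$. In fact the class of $\mu$ is precisely the generator of $H^1_{\mathcal{C}}(U)$ in the untwisted case, so you cannot gauge $\Phi\mu$ away by $\exp(-\Phi f)$. What \emph{would} work here is a homological perturbation or spectral sequence argument: the untwisted cone complex on $\mathbb{R}^{2n}$ has cohomology only in degrees $0$ and $1$, and the induced differential on this minimal model coming from the perturbation $\Phi\mu$ is $V\xrightarrow{\Phi}V$, yielding $\ker\Phi$ and $\operatorname{coker}\Phi$. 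But that argument has to be carried out, not replaced by a gauge transformation. Separately, the Tanaka--Tseng quasi-isomorphism you invoke is for the \emph{untwisted} complexes; the twisted version is what the paper establishes in Section~4.2, so you cannot simply cite it here.

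Second, your Jordan-block reduction leaves the genuinely nilpotent case unhandled. You correctly dispose of blocks with nonzero eigenvalue via Theorem~\ref{vanP}, but you then assert that the remaining contributions ``reduce to the untwisted $\mathbb{R}^{2n}$ computation.'' That is only true when $\Phi=0$ on the block. For a nilpotent block such as $\Phi=\left(\begin{smallmatrix}0&1\\0&0\end{smallmatrix}\right)$ the connection $A=\Phi\lambda$ is nonzero and the complex does not split as a direct sum of untwisted pieces indexed by $\ker\Phi$ and a complement; you would need an additional filtration argument (by powers of the nilpotent part) and a check that the associated spectral sequence degenerates.

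The paper's proof takes a quite different, more direct route. It first proves a preparatory lemma: if $\beta$ is $m_1'$-closed then $\beta-\lambda\times\dma\beta$ (respectively $\bar\beta+\lambda\times\dpa\bar\beta$ on the minus side) is closed for the \emph{untwisted} differential $m_1$. One then applies the known local Poincar\'e lemmas for $\dpp$, $\dpm$, $\dpp\dpm$ from \cite{TY2} to produce a primitive, and unwinds the resulting expression back into an explicit $m_1'$-exact form, treating $PH^0_+$, $PH^1_+$, $PH^{\geq 2}_+$, $PH^{<n}_-$, and $PH^n_-$ in five separate cases. This is more hands-on than your proposal but requires no Jordan decomposition, no spectral sequence, and no appeal to the cone quasi-isomorphism.
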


We will make use for the proof of the theorem the local Poincare' lemmas for the $\{\dpp,\dpp\dpm, \dpm\}$ operators in \cite{TY2}.  Before giving the proof, let us first write down some expressions that are $m_1$-closed, where $m_1$ refers to the the untwisted differentials of \eqref{mdef}, instead of $m_1'$-closed.  Below, we will often use the simpler product notation $\times$ to denote the $m_2$ map as described in Sec. 2.1.

\begin{lem}\label{closedlem}
Let $\be_k\in P^k_+$ such that $m_1'(\be_k)=0$ and $\bbe_k\in P^k_-$ such that $m_1'(\bbe_k)=0$.  Then for a symplectically flat connection of the form $A=\Phi\lam$ where $\Phi$ is a constant,
\begin{align*}
\dpp\left(\be_k - \lam \ti \dma \be_k\right)&=0\,, \qquad k=0, 1, \ldots, n-1,\\
-\dpp\dpm\left(\be_n - \lam \ti \dma \be_n\right)&=0\,,\\
-\dpm\left(\bbe_k + \lam \ti \dpa\bbe_k\right)&=0\,,\qquad k=0, 1, \ldots, n-1.
\end{align*}
\end{lem}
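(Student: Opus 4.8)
The plan is to express each proposed ``$m_1$-closed'' combination in terms of the twisted operators $\dpa,\dma$ and the data of the symplectically flat connection, and then reduce everything to the relations already derived: $(\dpa)^2=(\dma)^2=0$, the middle relation $\dpa\dma+\dma\dpa=\Phi$ on forms of degree $<n$ (together with $-\dpa\dma+\Phi$ in degree $n$), the fact that $\Phi$ is here a constant matrix, and the local shape $A=\Phi\lambda$ with $d\lambda=\omega$. The key observation to exploit is that since $A=\Phi\lambda$ with $\Phi$ constant and $\lambda$ a $1$-form, for any primitive $E$-valued form $\beta$ one has $d_A\beta = d\beta + \Phi\,\lambda\wedge\beta$, and similarly wedging with $\lambda$ interacts with $\dpp,\dpm,\Pi,L^{-1}$ in a controlled way. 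Concretely I would first record the identities relating $\dpa\beta$ to $\dpp\beta$ and $\dma\beta$ to $\dpm\beta$ when $A=\Phi\lambda$ (each picks up a correction term involving $\Phi$ and $\lambda\wedge\beta$ or its Lefschetz pieces), and similarly how $\lambda\times\gamma$ (the $m_2$ product with $\lambda$) relates to ordinary wedging followed by the appropriate projections.

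Next I would handle the three cases in order. For $k<n$ and $\beta_k\in P^k_+$ with $m_1'(\beta_k)=\dpa\beta_k=0$: the claim is $\dpp(\beta_k-\lambda\times\dma\beta_k)=0$. I would rewrite $\dpp$ acting on this combination using $\dpp=\Pi d$, expand $d(\lambda\times\dma\beta_k)$ using $d\lambda=\omega$ and the Leibniz-type behavior of the $m_2$ product, and convert the resulting $d$'s into $d_A$'s by adding and subtracting the $\Phi\lambda\wedge(\cdot)$ terms. The $\omega\wedge\dma\beta_k$ term contributes $\dpa\dma\beta_k$ after projection, which by the middle relation equals $\Phi\beta_k - \dma\dpa\beta_k = \Phi\beta_k$ since $\dpa\beta_k=0$; meanwhile the correction terms from turning $d$ into $d_A$, involving $\Phi\lambda$, should reassemble into exactly $\Phi\beta_k$ with the opposite sign (using constancy of $\Phi$, so $\Phi$ passes through $\Pi$, $L^{-1}$, and $d$), producing the desired cancellation. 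The $k=n$ case is the same computation with $\dpp\dpm$ in place of $\dpp$ and the degree-$n$ relation $-\dpa\dma+\Phi$; the $P^k_-$ case is the mirror image, starting from $m_1'(\bbe_k)=-\dma\bbe_k=0$ and using $-\dpm=L^{-1}d$ together with $\dma\dpa+\dpa\dma=\Phi$.

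The main obstacle I anticipate is bookkeeping: the $m_2$ product $\times$ with $\lambda$ is not simply $\lambda\wedge(\cdot)$ but involves $\Pi$, $*_r$, $L^{-1}$ and extra $\dpm$-terms (cases (1)--(2) of the $m_2$ definition), so I must be careful about which Lefschetz components survive in each degree and track the signs $(-1)^j$ coming from graded commutativity. The cleanest route is probably to not expand $\lambda\times\dma\beta_k$ directly at all, but instead to recognize $\beta_k-\lambda\times\dma\beta_k$ as (the primitive part of) a natural combination arising from the cone-algebra or quasi-isomorphism picture of Section 3 — e.g.\ the image under a contraction/homotopy of an $m_1'$-closed cone element — so that its $m_1$-closedness follows formally from $m_1'$-closedness plus $m_1'(A)=0$. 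I would try the direct computation first for $k<n$ to fix signs, and fall back on the homotopy-theoretic description if the direct route becomes unwieldy; in either case the only inputs needed are $(\dpa)^2=(\dma)^2=0$, the middle relations \eqref{dpmrela}--\eqref{dpmrelb}, constancy of $\Phi$, and $d\lambda=\omega$.
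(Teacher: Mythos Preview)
Your outline is correct and would succeed, but the paper's proof is cleaner and sidesteps almost all of the bookkeeping you flag. Instead of descending to $\Pi$, $d$, $\wedge$, $L^{-1}$, the paper stays at the level of the $A_\infty$ operations. It first records the auxiliary identity $\Phi(\be_k-\lam\times\dma\be_k)=\dpp\dma\be_k$ (this is just $\Phi\be_k=\dpa\dma\be_k$ rewritten using $\dpa=\dpp+\Phi\lam\times$), then applies the $A_\infty$ Leibniz rule $m_1m_2=m_2(m_1\otimes\bo+\bo\otimes m_1)$ directly to $\lam\times\dma\be_k$: since $\dpp\lam=\Pi(d\lam)=\Pi(\om)=0$, only $-\lam\times\dpp\dma\be_k$ survives, and substituting the auxiliary identity together with $\lam\times\lam=0$ (graded commutativity of $\times$) gives $\dpp\be_k+\Phi\lam\times\be_k=\dpa\be_k=0$ in one line. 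The $P^k_-$ case is the exact mirror. Your route via $\Pi d$ and explicit Lefschetz pieces reaches the same place but carries more debris.

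Where you should be more careful is the middle case $k=n$: it is \emph{not} ``the same computation with $\dpp\dpm$ in place of $\dpp$.'' The paper's argument there genuinely invokes the $m_3$ map and the fourth $A_\infty$ relation (with $m_4=0$ for the TTY algebra) to show that $\Phi\lam\times(\lam\times\dma\be_n)=m_3(\Phi\lam,\Phi\lam,\be_n)$, which is exactly what is needed to identify the result with $m_1'(\be_n)=0$. If you try to do this case by brute force at the $\Pi,L^{-1},\wedge$ level you will effectively be re-deriving those $m_3$ identities by hand, which is doable but considerably longer than you suggest.
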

\begin{proof}
For $\be_k\in P^k_+$ and $m_1'(\be_k)=0$, \eqref{phipt} implies $\Phi \be_k = (\dpp + \Phi\lam \ti) \dma \be_k$, or equivalently,
\begin{align}\label{closed1}
\Phi\left(\be_k - \lam \ti \dma \be_k\right) = \dpp \dma \be_k.
\end{align}
By direct computation, we will show $\be_k - \lam \ti \dma \be_k$ is $m_1$-closed.  When $k<n$, we have
\begin{align*}
\dpp\left(\be_k - \lam \ti \dma \be_k\right) 
&= \dpp \be_k + \lam \ti \dpp\dma\be_k\\
&=\dpp \be_k + \Phi \lam \ti (\be_k- \lam \ti \dma \be_k)\\
&= \dpa \be_k =0
\end{align*}
having used Leibniz rule in the first line and noting that $\lam \ti (\lam \ti \dma \be_k) = (\lam \ti \lam) \ti \dma \be_k = 0$. When $k=n$, we have
\begin{align*}
-\dpp\dpm\left(\be_n - \lam \ti \dma \be_n\right) 
&= -\dpp\dpm \be_n + \lam \ti \dpp \dma \be_n\\
&= - \dpp\dpm \be_n + \Phi\lam \ti  \be_n - \Phi\lam \ti \left(\lam \ti \dma \be_n\right)\\
&= m_1(\be_n) + m_2(\Phi\lam, \be_n) -m_3(\Phi \lam, \Phi\lam, \be_n)=-(\dpp\dpm)_A \be_n =0
\end{align*} 
which follows from the relation
\begin{align*}
\Phi\lam \ti (\lam \ti \dma \be_n)
&=(\Phi\lam \ti \lam) \ti \dma \be_n + m_3(\Phi\lam, \lam, \dpp\dma\be_n)\\
&= m_3(\Phi\lam, \Phi\lam, \be_n - \lam\ti \dma\be_n)= m_3(\Phi\lam, \Phi\lam, \be_n)
\end{align*}
since
\begin{align*}
m_3(\Phi\lam, \Phi\lam, m_2(\lam, \dma\be_n))= m_2 (\Phi\lam, m_3(\Phi\lam, \lam, \dma\be_n)) =0,
\end{align*}
having used the following $A_\infty$ relation involving $m_4$ (with $m_4=0$ for the TTY algebra):
\begin{align*}\begin{split}
m_2 (m_3 \otimes \bo)& + m_2 ( \bo \otimes m_3) - m_3 (m_2 \otimes \bo^{\otimes 2}) + m_3 ( \bo \otimes m_2 \otimes \bo) - m_3 ( \bo^{\otimes 2} \otimes m_2) \\
& = m_1 m_4 - m_4\left( m_1 \otimes \bo^{\otimes 3} + \bo \otimes m_1 \otimes \bo^{\otimes 2} + \bo^{\otimes 2} \otimes m_1 \otimes \bo + \bo^{\otimes 3} \otimes m_1\right)=0\,.
\end{split}\end{align*}
For $\bbe_k\in P^k_-$ with $k\neq n$ and $m_1(\bbe_k)=0$, \eqref{phimt} implies $\Phi \bbe_k = (\dpm - \Phi \lam \ti) \dpa \bbe_k$, or equivalently, 
\begin{align}\label{closed2}
\Phi \left(\bbe_k + \lam \ti \dpa\bbe_k\right) = \dpm \dpa \bbe_k.
\end{align}
Similar to above, it then follows
\begin{align*}
\dpm \left(\bbe_k + \lam \ti \dpa\bbe_k\right) 
&= \dpm \bbe_k - \lam \ti \dpm\dpa\bbe_k \\
&=\dpm \bbe_k - \Phi \lam \ti \bbe_k - \Phi\lam \ti (\lam \ti \dpa \bbe_k) = \dma \bbe_k =0
\end{align*}
having noted that $\lam \ti (\lam \ti \dpa \bbe_k) = (\lam\ti\lam)\ti \dpa\bbe_k =0$.
\end{proof}

We now give a proof of Theorem \ref{localP}.

\begin{proof}[Proof of Theorem \ref{localP}]
The proof of the theorem is divided into five cases.

\textbf{Case 1:} $PH^0_+(U,E)$. Let $\be_0\in PH^0_+(U,E)$.   By \eqref{phipt}, we have $\Phi \be_0 =0$ and hence, $\be_0 \in \ker \Phi$.  Imposing $\dpa\be_0=0$, we find 
\begin{align}
0=\dpa \be_0 = (d + \Phi \lam) \be_0 = d\be_0\,.
\end{align}  
Therefore, $\be_0$ must be constant and an element of $\ker \Phi$. 

\textbf{Case 2:} $PH^1_+(U,E)$. Let $\be_1\in PH^1_+(U,E)$.  By \eqref{phipt}, $\Phi \be_1 =  (d + \Phi \lam) \dma\be_1$ which we can write as
\begin{align}\label{loc1}
\Phi \left(\be_1-\lam\,\dma\be_1\right)=d\,\dma\be_1\,.
\end{align}
We can show that $(\be_1-\lam\,\dma\be_1)$ is $d$-closed:
\begin{align*}
d(\be_1-\lam\,\dma\be_1)
&=d\be_1 - \om\, \dma\be_1 + \lam \w d\,\dma\be_1\\
&=d\be_1 - \om\, \dma\be_1 + \Phi\lam \w (\be_1 - \lam\, \dma\be_1)\\
&= \dpa \be_1 =0
\end{align*}
By Poincar\'e Lemma, there exists $\xi_0\in P^0_+$ such that 
\begin{align}\label{loc2}
\be_1-\lam\,\dma\be_1= d\xi_0.
\end{align}  
Together with \eqref{loc1}, this implies
\begin{align}\label{loc3}
\dma\be_1 = \Phi \xi_0 + \sig_0
\end{align}
where $\sig_0$ is a constant matrix.  Substituting \eqref{loc3} into \eqref{loc2} gives
\begin{align*}
\be_1 = d \xi_0 + \lam (\Phi \xi_0 + \sig_0)= \dpa \xi_0 +  \lam\, \sig_0.
\end{align*}
But if $\sig_0\in \im\Phi$, then there exists some constant $\tilde{\sig_0}$ such that $\lam\,\sig_0 = (d+ \Phi \lam)\tilde{\sig_0}$.  Therefore, $\sig_0\in \coker \Phi$ if $\be_1=\lam\,\sig_0$ represents a non-trivial class.

\textbf{Case 3:} $PH^k_+(U,E)$ for $k=2, \ldots, n$.  Let $\be_k \in PH^k_+(U,E)$ for $k=2, \ldots, n$.  By Lemma \ref{closedlem} and local Poincar\'e lemmas for $\dpp$ and $\dpp\dpm$ operators \cite{TY2}, there exists a $\xi_{k-1}\in P^{k-1}_+$ such that 
\begin{align}\label{loc4}
\be_k - \lam\ti \dma \be_k  = \dpp \xi_{k-1}.
\end{align}
Inserting this into \eqref{closed1} then implies
\begin{align}\label{loc5}
\dma\be_k = \Phi \xi_{k-1} + \dpp \sig_{k-2}
\end{align}
for some $\sig_{k-2}\in P^{k-2}_+$.  Together, \eqref{loc4}-\eqref{loc5} give us 
\begin{align*}
\be_k &= \dpp \xi_{k-1} + \Phi \lam \ti \xi_{k-1} + \lam \ti \dpp \sig_{k-2}\\
&=\dpa \xi_{k-1} - \dpa(\lam \ti \sig_{k-2}) + \Phi \lam \ti ( \lam \ti \sig_{k-2})\\
&=\dpa(\xi_{k-1} - \lam \ti \sig_{k-2})
\end{align*}
after noting that $\lam \ti (\lam \ti \sig_{k-2}) = (\lam \ti \lam) \ti \sig_{k-2} =0.$

\textbf{Case 4:} $PH^k_-(U,E)$ for $k=0, 1, \ldots, n-1$.
Let $\bbe_k \in PH^k_+(U,E)$ for $k=0,1, \ldots, n-1$.  By Lemma \ref{closedlem} and the local Poincar\'e lemmas for $\dpm$ operator \cite{TY2}, there exists a $\bxi_{k+1}\in P^{k+1}_-$ such that 
\begin{align}\label{loc6}
\bbe_k + \lam\ti \dpa \bbe_k  = \dpm \bxi_{k+1}.
\end{align}
Inserting this into \eqref{closed2} then implies
\begin{align}\label{loc7}
\dpa\bbe_k = 
\begin{cases}
\Phi \bxi_{k+1} + \dpm \bsig_{k+2} &  k=0,1, \ldots, n-2\\
\Phi \bxi_{k+1} + \dpp\dpm \sig_n &  k = n-1
\end{cases}
\end{align}
for some $\bsig_{k+2}\in P^{k+2}_-$ and $\sig_n\in P^n_+$.  For $k<n-1$, \eqref{loc6}-\eqref{loc7} imply
\begin{align*}
\bbe_k &= \dpm \bxi_{k+1} - \Phi \lam \ti \bxi_{k+1} - \lam \ti \dpm \bsig_{k+2}\\
&=\dma \bxi_{k+1} + \dma(\lam \ti \bsig_{k+2}) + \Phi \lam \ti ( \lam \ti \bsig_{k+2})\\
&=\dma(\bxi_{k+1} + \lam \ti \bsig_{k+2})
\end{align*}
since $\lam \ti (\lam \ti \bsig_{k+2}) = (\lam \ti \lam) \ti \bsig_{k+2} =0$. Similarly, when $k=n-1$, we obtain 
\begin{align*}
\bbe_k &= \dpm \bxi_{n} - \Phi \lam \ti \bxi_{n} - \lam \ti \dpp\dpm \sig_{n}\\
&=\dma \bxi_{n} + \dma(\lam \ti \sig_{n}) + \Phi \lam \ti ( \lam \ti \sig_{n})\\
&=\dma(\bxi_{n} + \lam \ti \sig_{n})
\end{align*}
having noted that $\lam \ti (\lam \ti \sig_n) = (\lam\ti\lam)\ti \sig_n+ m_3(\lam, \lam, -\dpp\dpm \sig_n)$=0.

\textbf{Case 5:} $PH^n_-(U,E)$.  Let $\bbe_n \in PH^n_-(U,E)$.  Hence, $0=\dma \bbe_n = \dpm \bbe_n - \Phi \lam \ti \bbe_n$, or equivalently, 
\begin{align}\label{loc8}
\dpm \bbe_n= \Phi \lam \ti \bbe_n  
\end{align}
Further, by Leibniz rule, we find that $\lam \ti \bbe_n$ is $\dpm$-closed. 
\begin{align*}
\dpm(\lam \ti \bbe_n) = - \lam \ti \dpm \bbe_n = -\lam \ti (\Phi\lam \ti \bbe_n) = 0 \,.
\end{align*}
Since $\lam \ti \bbe_n\in P^{n-1}_-$ is $\dpm$-closed, local Poincar\'e lemma implies there exists an $\bxi_n\in P^n_-$ such that 
\begin{align}\label{loc9}
\lam \ti \bbe_n = \dpm \bxi_n\,.
\end{align}  
Together with \eqref{loc8}, this implies
\begin{align}\label{loc10}
\bbe_n= \Phi \bxi_n + \dpp\dpm \sig_n
\end{align}
for some $\sig_n\in P^n_+$.  Now let $\xi_n\in P^n_+$ be the same $n$-form as $\bxi_n$, i.e. $\xi_n=\bxi_n$ as primitive form.  We will show that in fact $\bbe_n = m_1'(\xi_n - \lam \ti \dpm \sig_n) = (-\dpa\dma + \Phi) (\xi_n - \lam \ti \dpm \sig_n)$ with $(\xi_n - \lam \ti \dpm \sig_n)\in P^n_+$.  To do so, we write $\lam \ti \dpm \sig_n = \Pi(\lam \w \dpm \sig_n)= \lam \w \dpm\sig_n - \om L^{-1}(\lam \w \dpm\sig_n)$.
\begin{align*}
(-\dpa&\dma + \Phi) (\xi_n - \lam \ti \dpm \sig_n)\\
 &= \left[-(d+\Phi \lam\w )L^{-1}(d+\Phi \lam\w) + \Phi\right]\left[\xi_n - (\lam \w \dpm\sig_n - \om L^{-1}(\lam \w \dpm\sig_n))\right]\\
 &= \left[-(d+\Phi \lam\w )L^{-1}(d+\Phi \lam\w) + \Phi\right](\xi_n - \lam \w \dpm\sig_n) 
\\ & \quad
+  \left[-(d+\Phi \lam\w )L^{-1}(d+\Phi \lam\w) + \Phi\right] \om L^{-1}(\lam \w \dpm\sig_n)
\end{align*}
Note first that the second term vanishes.
\begin{align*}
[-(d&+\Phi \lam\w )L^{-1}(d+\Phi \lam\w) + \Phi ] \om L^{-1}(\lam \w \dpm\sig_n)\\
&=-(d+\Phi \lam\w )L^{-1}\om(d+\Phi \lam\w) L^{-1}(\lam \w \dpm\sig_n) + \Phi\om L^{-1}(\lam \w \dpm\sig_n)\\
&= -(d+\Phi \lam\w )^2 L^{-1}(\lam \w \dpm\sig_n) + \Phi\om L^{-1}(\lam \w \dpm\sig_n) =0
\end{align*}
And the first term gives the desired result.
\begin{align*}
&\quad \left[-(d+\Phi \lam\w )L^{-1}(d+\Phi \lam\w) + \Phi\right](\xi_n - \lam \w \dpm\sig_n)\\
&=-(d+\Phi \lam\w )\left[\dpm \xi_n+ L^{-1}(\Phi \lam \w \xi_n)+ L^{-1}(-\om\w \dpm \sig_n + \lam \w \dpp\dpm \sig_n)\right] + \Phi \xi_n - \Phi\lam \w \dpm \sig_n\\
&=-(d+\Phi \lam\w )(-\dpm\sig_n) + \Phi \bxi_n - \Phi \lam \w \dpm\sig_n \\
&= \Phi\bxi_n + \dpp\dpm \sig_n = \bbe_n
\end{align*}
where to obtain the third line, we noted $\xi_n=\bxi_n$ as forms and applied the relations in \eqref{loc9} and \eqref{loc10}.
\end{proof}

\subsection{Global  cohomologies and relation to the twisted cone complex}
Tanaka-Tseng in \cite{TTY} gave a homotopy equivalence between the cochain complex of $P^*(M)$ and the cone complex of $\mathcal{C}^*(M)=\Om^*(M) \oplus \theta \, \Om^*(M)$.   For differential forms taking values in $E$, a symplectically flat vector bundle over a symplectic manifold $(M,\omega)$,  we will construct here a similar relation.

Primitive forms with values in $E$, denoted by $P^*(M,E)$, form a twisted cochain complex with differential $m_1'$ as in \eqref{tprimcomplex}. Similar to \eqref{Fdef}, we use $\mathcal{F}^*(M,E)$ to denote this complex, i.e.
$$
\mathcal{F}^j(M,E)=
\begin{cases}
P^j_+(M,E), & 0\leq j\leq n, \\
P^{2n+1-j}_-(M,E), & n+1\leq j\leq 2n+1.
\end{cases}
$$
For the cone, the differential forms with values in $E$
\begin{align}\label{conef}
\mathcal{C}^j(M,E)=\Omega^j(M,E)\oplus\theta\,\Omega^{j-1}(M,E)
\end{align}
also form a twisted cochain complex, with differential $D_{\mathcal{C}}= d_A-\theta\Phi$.  Recall from Proposition \ref{coneflat} that $D_{\mathcal{C}}^2=0$ as long as $d_A$ is symplectically flat.

It is useful to decompose each $\mathcal{C}^j(M,E)$ into primitive components.  Specifically, let $k\leq n$. For $\alpha_k\in \mathcal{C}^k(M,E)$, \eqref{conef} implies the following decomposition
\begin{align}\label{conek1}
\alpha_k %&= \sum_{r\geq 0}\omega^r\wedge\beta_{k-2r}+\theta\sum_{r\geq 0}\omega^r\wedge\beta_{k-2r-1} 
%\\&
&=\eta_k + \theta\, \eta_{k-1}\nonumber\\
&= \beta_k+\omega\wedge\beta_{k-2}+\cdots+\theta\left(\beta_{k-1}+\omega\wedge\beta_{k-3}+\ldots\right).
\end{align}
 For $\alpha_{2n+1-k}\in \mathcal{C}^{2n+1-k}(M,E)$, noting that $\mathcal{C}^{2n+1-k}(M,E)
=\Omega^{2n+1-k}(M,E)+\theta\big( *_r\Omega^k(M,E)\big)$, we have the following  %[\textit{still need to define $*_r$}]
\begin{align}\label{conek2}
\alpha_{2n+1-k} %&= *_r\sum_{r\geq 0}\omega^r\wedge\beta_{k-2r-1}+\theta(*_r\sum_{r\geq 0}\omega^r\wedge\beta_{k-2r}) 
%\\&
&=\eta_{2n+1-k} + \theta\,\eta_{2n-k}\nonumber\\
&= \om^{n-k+1}\w\left(\beta_{k-1}+\omega\wedge\beta_{k-3}+\ldots\right)+\theta\, \om^{n-k}\w\left(\beta_k+\omega\wedge\beta_{k-2}+\ldots\right).
\end{align}
Throughout this subsection, all $\beta_i\in P^i(M,E)$ are primitive forms with values in $E$.  These primitive forms can be acted upon by $d_A$.

We now define two maps.  
\begin{defn}\label{fgdef}
In terms of the decompositions \eqref{conek1}-\eqref{conek2}, we define the map $f:\mathcal{C}^j(M,E)\to\mathcal{F}^j(M,E)$ 
\begin{align*}
f(\alpha_j) = 
\begin{cases}
\quad\ \beta_j  & 0\leq j\leq n \\
-\left(\beta_k +\dpa\beta_{k-1}\right) & n+1\leq j\leq 2n+1,\, k=2n+1-j
\end{cases}
\end{align*}
and $g:\mathcal{F}^j(M,E)\to\mathcal{C}^j(M,E)$ 
\begin{align*}
g(\beta_j) = 
\begin{cases}
\beta_j-\theta \,\dma\beta_j & 0\leq j\leq n \\
%-\theta *_r\beta_k=
-\theta\,\omega^{n-k}\wedge\beta_k & n+1\leq j\leq 2n+1,\, k=2n+1-j
\end{cases}
\end{align*}
\end{defn}
%Note when $n+1\leq j\leq 2n+1$, $\mathcal{F}^j(M,E)=P^k_-(M,E)$.

In the next two lemmas, we will show that both $f$ and $g$ are chain maps.  
\begin{lem}
The map $f$ is a chain map, i.e the following graph commutes for all $0\leq j\leq 2n$.
$$
\xymatrix
{
\mathcal{C}^j \ar[r]^-{D_{\mathcal{C}}} \ar[d]_-f & \mathcal{C}^{j+1} \ar[d]^-f  \\
\mathcal{F}^j \ar[r]^-{m'_1} & \mathcal{F}^{j+1}
}
$$
\end{lem}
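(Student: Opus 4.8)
The plan is to verify commutativity of the square by a direct case-by-case computation, splitting on the degree $j$ and using the explicit formulas for $f$, $g$, $D_{\mathcal C} = d_A - \theta\Phi$, and the twisted differentials $m'_1 \in \{\dpa,\, -(\dpa\dma - \Phi),\, -\dma\}$. The underlying principle is the Lefschetz decomposition: a primitive form wedged with a one-form (here $A$, or the class of $\lambda$ implicitly inside $d_A$) splits into exactly two primitive pieces, and $\dpa = \Pi d_A$, $\dma = L^{-1} d_A$ pick those pieces out; so $d_A\beta_i = \dpa\beta_i + \om\w\dma\beta_i$ when $\beta_i$ is primitive of degree $< n$, with the $k=n$ endpoint behaving specially.

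First I would handle the range $0 \le j \le n-1$. Here $f(\alpha_j) = \beta_j$ where $\alpha_j = \eta_j + \theta\eta_{j-1}$ and $\beta_j = \Pi\eta_j$ is the primitive part of $\eta_j$. Computing $D_{\mathcal C}\alpha_j = d_A\eta_j + \theta(-d_A\eta_{j-1} - \Phi\eta_j)$ — wait, more carefully $D_{\mathcal C}(\eta_j + \theta\eta_{j-1}) = d_A\eta_j + (d\theta)\w\eta_{j-1} - \theta d_A\eta_{j-1} - \theta\Phi(\eta_j + \theta\eta_{j-1}) = (d_A\eta_j + \om\w\eta_{j-1}) + \theta(-d_A\eta_{j-1} - \Phi\eta_j)$; applying $f$ (which picks the primitive part of the non-$\theta$ component) gives $\Pi(d_A\eta_j + \om\w\eta_{j-1}) = \Pi(d_A\eta_j) = \dpa\beta_j$ since $\om\w\eta_{j-1}$ is non-primitive and $\Pi d_A$ only sees the primitive part of $\eta_j$. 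On the other side, $m'_1 f(\alpha_j) = \dpa\beta_j$. So these agree; the main point is checking that the auxiliary terms land in the kernel of the relevant projection. The degenerate cases $j=0$ and the transition at $j=n-1 \to j=n$ (where the target differential on $\mathcal F^n$ is still $\dpa$) need a separate glance but follow the same pattern.

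Next comes $j = n$, where the target is $\mathcal F^{n+1} = P^n_-(M,E)$ and $m'_1 = -(\dpa\dma - \Phi)$; this is the most delicate case because the formula for $f$ switches branches here and $f(\alpha_{n+1}) = -(\beta_n + \dpa\beta_{n-1})$. I would expand $D_{\mathcal C}\alpha_n$, extract its degree-$(n+1)$ and $\theta$-degree-$n$ primitive components using \eqref{conek2} with $k = n$, and match against $-(\dpa\dma - \Phi)f(\alpha_n) = -(\dpa\dma - \Phi)\beta_n$, using the relation $\om(\dpa\dma + \dma\dpa) = \om\Phi$ from \eqref{dpmrelb} and the fact that $d_A\Phi = 0$. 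Finally, for $n+1 \le j \le 2n$, the target differential is $-\dma$ and both $f$-values are on the ``minus'' branch; here I expect to need the identity $\dma(\om^{n-k}\w\beta_k) = \pm\,\om^{n-k}\w(\text{something})$ together with $L^{-1}$-bookkeeping, plus the defining relation between $\eta_{2n+1-j}$ and $\beta_{k-1} = \dpa^{-1}(\ldots)$ — actually the cleanest route is to use that $f$ on this range is, up to sign, ``take the top primitive coefficient after stripping powers of $\om$'' and that $D_{\mathcal C}$ commutes with $L^{-1}$-type operations since $[d_A,\om] = 0$.

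The main obstacle I anticipate is the $j = n$ case: reconciling the two-term output of $f$ on the ``minus'' side with the composite operator $\dpa\dma - \Phi$ requires carefully tracking how the $\om$-coefficient of $D_{\mathcal C}\alpha_n$ (the piece that would be $\theta$-free but proportional to $\om$) feeds into the primitive projection, and invoking \eqref{dpmrelb} at exactly the right moment; a sign error there is easy to make. A secondary annoyance is uniformity of signs across the branch change in the definition of $f$ at $j = n$ versus $j = n+1$. I would organize the proof as three displayed computations — one for $0\le j\le n-1$, one for $j=n$, one for $n+1\le j\le 2n$ — each reducing the claim to an identity already established in Section 2 (the relations \eqref{dpmrela}, \eqref{dpmrelb}, and $d_A\Phi=0$) or to the elementary fact $[d_A,\om]=0$ together with the uniqueness of the Lefschetz decomposition.
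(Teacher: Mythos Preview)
Your plan is correct and matches the paper's proof: the same three-case split ($j<n$, $j=n$, $j>n$) with direct Lefschetz-decomposition bookkeeping and appeal to the identities \eqref{dpmrela}--\eqref{dpmrelb}. Two small recalibrations to watch when you execute: in the $j=n$ case the identity actually needed is $(\dpa)^2=0$ from \eqref{dpmrela} (neither \eqref{dpmrelb} nor $d_A\Phi=0$ is invoked there), and in the $j>n$ case remember that $f$ uses \emph{both} top primitive coefficients via $-(\beta_k+\dpa\beta_{k-1})$, not just one --- the paper then applies \eqref{dpmrelb} to rewrite $\dma\dpa\beta_{k-1}$.
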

\begin{proof}
\noindent \textbf{Case 1.} $j<n$.
%For arbitrary $\alpha_j\in\mathcal{C}^j$, we can write it as $\alpha_j=\beta_j+\omega\wedge\xi+\theta\eta$ where $\beta_j\in PH^j(M,E)$. 
For $\alpha_j\in\mathcal{C}^j$, we have $f(\alpha_j)=\beta_j$ and  $m'_1\circ f(\alpha_j)=\dpa\beta_j$.  On the other hand, 
\begin{align*}
D_{\mathcal{C}}(\alpha_j)=(d_A)\eta_j + \om \w\eta_{j-1} -\theta\left[(d_A)\eta_{j-1} + \Phi\, \eta_j\right] 
%= \dpa \be_j + \CO(\om) - \th [ \ldots ]
%(d+A)\beta_j+\omega\wedge(d\xi+A\xi-\eta)-\theta(\Phi\beta_j+\omega\wedge\xi+d\eta+A\eta).
\end{align*}
Therefore, $f\circ D_{\mathcal{C}}(\alpha_j)=\Pi(d_A)\beta_j=m'_1\circ f(\alpha_j)$.

\noindent \textbf{Case 2.} {\bf $j=n$.}
For $\alpha_n\in\mathcal{C}^n$, 
%we can write it as $\alpha_n=\beta_n+\omega\wedge\beta_{n-2}+\omega^2\wedge\xi+\theta(\beta_{n-1}+\omega\wedge\eta)$. 
%Here $\beta_i\in PH^i(M,E)$ for $i=n-2,n-1,n$. 
we have $f(\alpha_n)=\beta_n$ and
\begin{align*}
m'_1\circ f(\alpha_j) =\left(-\dpa\dma+\Phi\right)\beta_n.
\end{align*}
On the other hand,
\begin{align*}
D_{\mathcal{C}}(\alpha_n)&=(d_A)\eta_n + \om \w\eta_{n-1} -\theta\left[(d_A)\eta_{n-1} + \Phi\, \eta_n\right]\\
&=\left[\om \w \left(\dma\beta_n + \dpa\beta_{n-2}+\beta_{n-1}\right) + \CO(\om^2)\right] -\theta \left[ \left(\dpa\beta_{n-1} + \Phi \beta_n\right) +\CO(\om)\right]\\
%&= (d+A)\beta_n+\omega\wedge[(d+A)\beta_{n-2}+\beta_{n-1}]+\omega^2\wedge[(d+A)\xi+\eta] \\
%& \qquad -\theta[\Phi\beta_n+(d+A)\beta_{n-1}+\Phi(\omega\wedge\beta_{n-2})+(d+A)(\omega\wedge\eta)+\Phi(\omega^2\wedge\xi)] \\
%&= *_r[L^{-1}(d+A)\beta_n+(d+A)\beta_{n-2}+\beta_{n-1}+\omega\wedge((d+A)\xi+\eta)] \\
%& \qquad -\theta *_r[\Phi\beta_n+(d+A)\beta_{n-1}+\omega\wedge(\Phi\beta_{n-2}+(d+A)\eta+\omega\wedge\Phi\xi)].
\end{align*}
Hence,
\begin{align*}
f\circ D_{\mathcal{C}}(\alpha_n)&= 
\dpa\be_{n-1} + \Phi\be_n - \dpa\!\left(\dma\be_n + \dpa\be_{n-2}+\be_{n-1}\right)= \left(-\dpa\dma + \Phi\right)\be_n
%\Phi\beta_n+\Pi^0(d+A)\beta_{n-1}-\Pi^0(d+A)[L^{-1}(d+A)\beta_n+\Pi^0(d+A)\beta_{n-2}+\beta_{n-1}] \\
%&= \Phi\beta_n-\Pi^0(d+A)L^{-1}(d+A)\beta_n-\Pi^0(d+A)\Pi^0(d+A)\beta_{n-2}
\end{align*}
Therefore,  $m'_1\circ f(\alpha_n)=f\circ D_{\mathcal{C}}(\alpha_n)$.

\noindent \textbf{Case 3.} $j>n$.
For $\alpha_{2n+1-k}\in\mathcal{C}^j$, with $k=2n+1-j$, we have  $f(\alpha_{2n+1-k})=-\beta_k-\dpa\beta_{k-1}$, and
\begin{align*}
m'_1\circ f(\alpha_{2n+1-k}) &= \dma \be_k + \dma \dpa\be_{k-1}= \dma\be_k -\dpa\dma\be_{k-1} +\Phi \be_{k-1}.
\end{align*}
having used \eqref{dpmrelb}.  On the other hand,
\begin{align*}
D_{\mathcal{C}}(\alpha_{2n+1-k})&= \om^{n-k+2}\left[\left(\dma \be_{k-1}+\dpa\be_{k-2}+\be_{k-2}\right)+\CO(\om)\right] \\ 
&\qquad - \th \, \om^{n-k+1} \left[\left(\dma\be_k + \dpa\be_{k-2} + \Phi\be_{k-1}\right) + \CO(\om)\right] \\
%\\&= \omega^{n-k+1}\wedge[(d+A)\beta_{k-1}+\beta_k]+\omega^{n-k+2}\wedge[(d+A)\beta_{k-3}+\beta_{k-2}]+\omega^{n-k+3}\wedge [(d+A)\xi+\eta] \\
%& -\theta \{ \omega^{n-k}\wedge(d+A)\beta_k+\omega^{n-k+1}\wedge[\Phi\beta_{k-1}+(d+A)\beta_{k-2}]+\omega^{n-k+2}\wedge[\Phi\beta_{k-3}+(d+A)\eta] \\
%& +\omega^{n-k+3}\wedge\Phi\xi \} \\
%&= *_r \{ L^{-1}(d+A)\beta_{k-1}+(d+A)\beta_{k-3}+\beta_{k-2}+\omega\wedge [(d+A)\xi+\eta] \} \\
%& -\theta *_r \{ L^{-1}(d+A)\beta_k+\Phi\beta_{k-1}+(d+A)\beta_{k-2}+\omega\wedge[\Phi\beta_{k-3}+(d+A)\eta+\omega\wedge\Phi\xi] \}
%\end{align*}
%Hence,
%\begin{align*}
f\circ D_{\mathcal{C}}(\alpha_{2n+1-k}) &= \left(\dma\be_k + \dpa\be_{k-2} + \Phi\be_{k-1}\right) -\dpa\left(\dma \be_{k-1}+\dpa\be_{k-2}+\be_{k-2}\right)\\
&=\dma\be_k +\Phi\be_{k-1} -\dpa\dma\be_{k-1}
%\\
%&= L^{-1}(d+A)\beta_k+\Phi\beta_{k-1}+\Pi^0(d+A)\beta_{k-2}-\Pi^0(d+A)[L^{-1}(d+A)\beta_{k-1}+\Pi^0(d+A)\beta_{k-3}+\beta_{k-2}] \\
%&= L^{-1}(d+A)\beta_k+\Phi\beta_{k-1}-\Pi^0(d+A)L^{-1}(d+A)\beta_{k-1}-\Pi^0(d+A)\Pi^0(d+A)\beta_{k-3}.
\end{align*}
proving $m'_1\circ f(\alpha_{2n+1-k})=f\circ D_{\mathcal{C}}(\alpha_{2n+1-k})$.
\end{proof}

\begin{lem}
The map $g$ is a chain map, i.e the following graph commutes for all $0\leq j\leq 2n$.
$$
\xymatrix
{
\mathcal{C}^j \ar[r]^-{D_{\mathcal{C}}} & \mathcal{C}^{j+1}  \\
\mathcal{F}^j \ar[r]^-{m'_1} \ar[u]^g & \mathcal{F}^{j+1} \ar[u]_g
}
$$
\end{lem}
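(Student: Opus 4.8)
The plan is to verify commutativity of the square $g \circ m'_1 = D_{\mathcal{C}} \circ g$ by the same case analysis used for $f$, splitting according to the degree $j$ of the input primitive form: namely $j < n$, $j = n$, and $j > n$. In each case I would compute both compositions explicitly using the explicit formulas for $g$ in Definition \ref{fgdef}, the expression $D_{\mathcal{C}} = d_A - \theta\,\Phi$ together with the Lefschetz decomposition $d_A = \dpa + \om\w\dma$ on primitive forms from \eqref{dAdef}, and the twisted differential $m'_1$ from \eqref{mpdef}. For $j < n$, applying $m'_1$ gives $\dpa\beta_j \in P^{j+1}_+$, so $g(m'_1\beta_j) = \dpa\beta_j - \theta\,\dma\dpa\beta_j$; on the other side $D_{\mathcal{C}}(\beta_j - \theta\,\dma\beta_j) = d_A\beta_j + \om\w\dma\beta_j - \theta(d_A\dma\beta_j + \Phi\beta_j)$, and I would match the two sides by using $d_A\beta_j = \dpa\beta_j + \om\w\dma\beta_j$ and $\om\,\dma\dpa\beta_j = \om(\Phi - \dpa\dma)\beta_j$, i.e. \eqref{dpmrelb}, to reconcile the $\theta$-component, while the non-$\theta$ component needs $\Pi$-decomposing $d_A(\beta_j - \theta\dma\beta_j)$ appropriately.

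For $j = n$, the subtlety is that $m'_1\beta_n = (-\dpa\dma + \Phi)\beta_n$ lands in $P^n_-$, so $g((-\dpa\dma+\Phi)\beta_n) = -\theta\,\om^{0}\w(-\dpa\dma+\Phi)\beta_n = -\theta(-\dpa\dma+\Phi)\beta_n$ by the second branch of $g$ with $k = n$; I must check this equals $D_{\mathcal{C}}(\beta_n - \theta\dma\beta_n)$, which after expanding and using $d_A\dma\beta_n = \dpa\dma\beta_n + \om\w\dma\dma\beta_n = \dpa\dma\beta_n$ (since $(\dma)^2 = 0$ by \eqref{dpmrela}) and observing the non-$\theta$ part $d_A\beta_n + \om\w\dma\beta_n$ must vanish or be absorbed — here I should be careful that on $P^n$ the term $d_A\beta_n$ has a genuine top-Lefschetz piece, so the bookkeeping is that $d_A\beta_n = \dpa\beta_n + \om\w\dma\beta_n$ with $\dpa\beta_n \in P^{n+1}_+ = 0$, hence $d_A\beta_n = \om\w\dma\beta_n$, and the non-$\theta$ component of $D_{\mathcal{C}}g(\beta_n)$ becomes $d_A\beta_n - \om\w\dma\beta_n$... wait, that is $2\om\w\dma\beta_n$, so I need instead to recognize that $g(\beta_n)$'s image lies in the correct graded piece and track signs carefully. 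For $j > n$, writing $\beta_{k}$ with $k = 2n+1-j \le n$, I have $g(\beta_k \text{ as element of } P^k_-) = -\theta\,\om^{n-k}\w\beta_k$ and $m'_1$ on $P^k_-$ gives $-\dma\beta_k \in P^{k-1}_-$, so $g(m'_1\beta_k) = -\theta\,\om^{n-k+1}\w(-\dma\beta_k) = \theta\,\om^{n-k+1}\w\dma\beta_k$; on the other side $D_{\mathcal{C}}(-\theta\om^{n-k}\w\beta_k) = -\theta\, d_A(\om^{n-k}\w\beta_k) = -\theta\,\om^{n-k}\w d_A\beta_k = -\theta\,\om^{n-k}\w(\dpa\beta_k + \om\w\dma\beta_k)$, and I would check $-\om^{n-k}\w\dpa\beta_k = 0$ (this needs $\om^{n-k}\w\dpa\beta_k$, with $\dpa\beta_k\in P^{k+1}$, to vanish, which holds when $n-k \ge n-(k+1)+1$, i.e. always, since $\om^{n-s+1}\w(\text{primitive }s\text{-form}) = 0$) so only $-\om^{n-k+1}\w\dma\beta_k$ survives — but signs disagree, so I would recheck the sign convention in $g$ or note a $(-1)$ from commuting $\theta$ past $\om^{n-k}$; in any case the content reduces to these primitivity-vanishing identities and \eqref{dpmrelb}.

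The main obstacle I anticipate is getting all the signs and the $\theta$-commutation conventions consistent across the three cases, especially at $j = n$ where the target switches from the $P^*_+$ half of the complex to the $P^*_-$ half and the formula for $g$ changes branch; there one must be scrupulous about whether $\theta$ anticommutes or commutes with forms and about the placement of the $-\dpa\dma + \Phi$ differential. A secondary point requiring care is that $D_{\mathcal{C}}$ acting on $\om^r\w\beta$ should be simplified using $[d_A,\om]=0$ from \eqref{dAcom} and $d\theta = \om$, which contributes extra terms when $\theta$ is present; I would handle this by always first pushing $d_A$ past powers of $\om$ and past $\theta$ (picking up $\om$ from $d\theta$) before invoking the Lefschetz decomposition of $d_A$ on the resulting primitive forms. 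Once the three diagrams are checked, the lemma follows, and together with the previous lemma this shows $f$ and $g$ are both chain maps, setting up the quasi-isomorphism comparison with the twisted cone complex.
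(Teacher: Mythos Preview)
Your approach is exactly the paper's: the same three-case split $j<n$, $j=n$, $j>n$, the same use of \eqref{dpmrelb} for Case 1, of $\dpa\beta_n=0$ for Case 2, and of the primitivity vanishing $\om^{n-s+1}\wedge P^s=0$ for Case 3. The plan is sound.

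The sign discrepancies you flag all come from a single slip in how $D_{\mathcal{C}}$ acts on the $\theta$-part. Since $\theta$ has degree $1$ and $d\theta=\om$, the graded Leibniz rule gives
\[
D_{\mathcal{C}}(\theta\,\xi) \;=\; d_A(\theta\,\xi) - \theta\Phi(\theta\,\xi) \;=\; \om\wedge\xi \;-\; \theta\, d_A\xi,
\]
not $+\theta\,d_A\xi$ and not without the $\om\wedge\xi$ term. With this correction: in Case 1 the non-$\theta$ part of $D_{\mathcal{C}}\,g(\beta_j)$ is $d_A\beta_j - \om\wedge\dma\beta_j = \dpa\beta_j$ (minus, not plus), matching immediately; in Case 2 the non-$\theta$ part is then $\dpa\beta_n=0$, so there is nothing to ``absorb'' and your worry about $2\om\wedge\dma\beta_n$ evaporates; in Case 3 you get
\[
D_{\mathcal{C}}\bigl(-\theta\,\om^{n-k}\wedge\beta_k\bigr) \;=\; -\om^{n-k+1}\wedge\beta_k \;+\; \theta\,\om^{n-k}\wedge d_A\beta_k,
\]
the first term vanishes by primitivity, and the $\theta$-term now has the correct sign to match $g\circ m'_1$. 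No anticommutation of $\theta$ with $\om$ is involved (that commutes, since $\om$ is even); the sign is purely from the Leibniz rule on the odd element $\theta$.
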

\begin{proof}
\noindent \textbf{Case 1}. $j<n$.
For $\beta_j\in\mathcal{F}^j$, $g(\beta_j)=\beta_j-\theta \dma \beta_j$, and 
\begin{align}\label{gc1}
D_{\mathcal{C}}\circ g(\beta_j) &= (d_A)\beta_j-\omega\wedge \dma\beta_j+\theta\left[(d_A)\dma\beta_j-\Phi\beta_j\right]\nonumber\\
&= \dpa\beta_j+\theta\left[\dpa\dma\beta_j-\Phi\beta_j\right].
\end{align}
On the other hand, $m'_1(\beta_j)=\dpa\beta_j$ and 
$$g\circ m'_1(\beta_j)=\dpa\beta_j-\theta \dma\dpa\beta_j = \dpa\beta_j + \th\left[(d_A)\dma\beta_j-\Phi\beta_j\right]$$
using \eqref{dpmrelb}. Therefore, $D_{\mathcal{C}}\circ g(\beta_j) =g\circ m'_1(\beta_j).
$

\noindent \textbf{Case 2}. $j=n$.
For $\beta_n\in\mathcal{F}^n$, we have from \eqref{gc1} that
\begin{align*}
D_{\mathcal{C}}\circ g(\beta_n)=\theta\left[\dpa\dpm\beta_n-\Phi\beta_n\right]\
\end{align*}
having noted that $\dpa\beta_n=0$.  On the other hand, $m'_1(\beta_n)=-\dpa\dma\beta_n+\Phi\beta_n$ and we find $g\circ m'_1(\beta_n)=\theta\left[\dpa\dpm\beta_n-\Phi\beta_n\right] =D_{\mathcal{C}}\circ g(\beta_n)$.

\noindent \textbf{Case 3}. $j>n$.
For  $\beta_k\in P^k_-(M,E)=\mathcal{F}^j$ where $k=2n+1-j$, we have $g(\beta_k)=-\theta\,\omega^{n-k}\wedge\beta_k$ and
\begin{align*}
D_{\mathcal{C}}\circ g(\beta_k)=-\omega^{n-k+1}\wedge\beta_k+\theta\,\omega^{n-k}\wedge(d_A)\beta_k=\theta\,\omega^{n-k+1}\wedge\dma\beta_k.
\end{align*}
where we have used the primitive property that $\om^{n-s+1}\w \beta_s =0$ for $\beta_s\in P^s$.
On the other hand, $m'_1(\beta_k)=-\dma\beta_k$ and  we find $g\circ m'_1(\beta_k) =  \theta\, \omega^{n-k+1} \w \dma\beta_k = D_{\mathcal{C}}\circ g(\beta_k)$.
\end{proof}

With $f$ and $g$ being chain maps, we now proceed to relate the cohomologies associated with $\CF^*$ and $\CC^*$.  Define the operator $G:\mathcal{C}^j\to\mathcal{C}^{j-1}$  
\begin{align*}
G(\eta_j+\theta\,\eta_{j-1})=\eta_{j-1}+\theta L^{-1}\eta_j,
\end{align*}
for any $\eta_j\in\Omega^j(M,E)$ and $\eta_{j-1}\in\Omega^{j-1}(M,E)$. 
We have the following the result.
\begin{lem}\label{gfhom}
The maps $f, g$ and $G$ are related as follows: 
\begin{align*}
fg=\id_\mathcal{F}\,, \qquad 
\id_\mathcal{C}-gf-\Phi=D_{\mathcal{C}}G+GD_{\mathcal{C}}\,.
\end{align*}
\end{lem}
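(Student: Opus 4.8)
The plan is to verify each of the two identities in Lemma \ref{gfhom} by direct computation, treating the two grading ranges ($0 \leq j \leq n$ and $n+1 \leq j \leq 2n+1$) separately as was done for the chain map lemmas. For the first identity $fg = \id_\mathcal{F}$, I would simply plug the formula for $g(\beta_j)$ from Definition \ref{fgdef} into $f$: for $0 \leq j \leq n$, $g(\beta_j) = \beta_j - \theta\,\dma\beta_j$ has primitive leading component $\beta_j$, so $f(g(\beta_j)) = \beta_j$; for $n+1 \leq j \leq 2n+1$ with $k = 2n+1-j$, $g(\beta_j) = -\theta\,\om^{n-k}\w\beta_k$ has $\eta_{2n+1-k}$-part equal to zero and $\theta$-part $-\om^{n-k}\w\beta_k$, so in the decomposition \eqref{conek2} the relevant primitive coefficients are $\beta_k^{\text{new}} = -\beta_k$ and $\beta_{k-1}^{\text{new}} = 0$, whence $f(g(\beta_j)) = -(\beta_k^{\text{new}} + \dpa\beta_{k-1}^{\text{new}}) = \beta_k$. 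This part is routine bookkeeping with the Lefschetz decomposition.

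The substantive part is the homotopy identity $\id_\mathcal{C} - gf - \Phi = D_\mathcal{C}G + GD_\mathcal{C}$. Here I would take a general element $\alpha_j = \eta_j + \theta\,\eta_{j-1} \in \mathcal{C}^j$, compute $G\alpha_j$, $D_\mathcal{C}\alpha_j$, then $D_\mathcal{C}G\alpha_j$ and $GD_\mathcal{C}\alpha_j$, and add. The operator $G$ essentially shifts the $\theta$ and non-$\theta$ parts and applies $L^{-1}$, so $D_\mathcal{C}G + GD_\mathcal{C}$ will produce terms involving $d_A$, $L^{-1}$, $\om\w$, and $\Phi$ acting on $\eta_j, \eta_{j-1}$; the key algebraic input is the Lefschetz-type identity $L^{-1}(\om \w \mu) = \mu + \om\w L^{-1}\mu$ on forms (equivalently $[L^{-1}, L] = $ a degree-counting operator), together with $[d_A, \om] = 0$ from \eqref{dAcom}, $[d_A,\Phi]=0$, and $d_A^2 = \Phi\,\om$ from the symplectic flatness condition. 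I expect the computation to show that $D_\mathcal{C}G + GD_\mathcal{C}$ applied to $\alpha_j$ reproduces $\alpha_j$ minus the piece captured by $gf$ minus $\Phi\alpha_j$; the $\Phi$ term on the right is exactly the obstruction coming from $d_A^2 \neq 0$, which is why $\Phi$ appears in the identity rather than a clean $\id - gf$.

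Matching against $gf\alpha_j$ requires knowing $f\alpha_j$ and then $g$ of it: for $j \leq n$, $gf\alpha_j = \beta_j - \theta\,\dma\beta_j$ where $\beta_j = \Pi\eta_j$, and one must check that $\eta_j + \theta\eta_{j-1} - (\beta_j - \theta\,\dma\beta_j) - \Phi\alpha_j$ equals the computed $D_\mathcal{C}G\alpha_j + GD_\mathcal{C}\alpha_j$; the non-primitive Lefschetz components of $\eta_j$ (the $\om\w\beta_{j-2} + \cdots$ part) must be produced by the homotopy, and this is where the identity $L^{-1}\om = \id + \om L^{-1}$ does the real work of "peeling off" powers of $\om$. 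The analogous check for $j > n$ uses the decomposition \eqref{conek2} and the definition $f(\alpha_{2n+1-k}) = -(\beta_k + \dpa\beta_{k-1})$, so one additionally needs \eqref{dpmrelb} to relate $\dma\dpa$ to $\Phi - \dpa\dma$. The main obstacle is organizational rather than conceptual: keeping the Lefschetz components straight across the $\theta$ and non-$\theta$ parts and in both degree ranges, and confirming that every stray term either cancels or is absorbed into the $\Phi$ correction. I would structure the proof as two cases ($j \leq n$ and $j > n$), in each case writing out $G$, $D_\mathcal{C}$, their composites, and then reconciling with $\id - gf - \Phi$, citing \eqref{dAcom}, \eqref{dpmrelb}, and the flatness relation $d_A^2 = \Phi\om$ as needed.
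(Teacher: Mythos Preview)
Your plan is essentially the paper's own: verify $fg=\id_\mathcal{F}$ directly from Definition \ref{fgdef}, then for the homotopy identity compute $D_\mathcal{C}G\alpha_j$ and $GD_\mathcal{C}\alpha_j$ for a general $\alpha_j=\eta_j+\theta\,\eta_{j-1}$, add, and compare with $\id_\mathcal{C}-gf-\Phi$ in the two ranges $j\le n$ and $j>n$ using the Lefschetz decompositions \eqref{conek1}--\eqref{conek2}. That outline is correct.

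Two small corrections. First, the formula you state as the ``key algebraic input'', $L^{-1}(\om\w\mu)=\mu+\om\w L^{-1}\mu$, is false in this paper's conventions (e.g.\ for $\mu$ primitive of degree $<n$ the left side is $\mu$ while the right side is $\mu+0=\mu$, but for $\mu=\om\beta$ with $\beta$ primitive both sides are $\om\beta$ on the left and $2\om\beta$ on the right). What you actually need, and what the paper uses, are the simpler facts that follow directly from the Lefschetz decompositions: $\om\w L^{-1}\eta_j=\eta_j-\beta_j$ for $j\le n$, $\om\w L^{-1}\eta_j=\eta_j$ for $j>n$, $L^{-1}(\om\w\eta_{j-1})=\eta_{j-1}$ for $j\le n$, and $L^{-1}d_A\eta_j-d_AL^{-1}\eta_j=\dma\beta_j$ (resp.\ $-\om^{n-k}\w\dpa\beta_{k-1}$) in the two ranges. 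Second, neither $d_A^2=\Phi\,\om$ nor \eqref{dpmrelb} is actually invoked in this computation; the $\Phi$ term already appears explicitly in $D_\mathcal{C}=d_A-\theta\Phi$, and the rest is bookkeeping with $L^{-1}$ and $\om\w$ on Lefschetz components together with $[d_A,\om]=0$.
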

\begin{proof}
That $fg=\id_\mathcal{F}$ follows immediately from the definitions of the maps in Definition \ref{fgdef}. For the second relation, we consider first the left-hand side.  We have from the definitions
\begin{align*}
gf(\alpha_j) = 
\begin{cases}
\beta_j-\theta\, \dma\beta_j & 0\leq j\leq n \\
\th\, \om^{n-k}\w \left(\be_k + \dpa \be_{k_1}\right) & n+1\leq j\leq 2n+1,\, k=2n+1-j
\end{cases}
\end{align*}
and therefore,
\begin{align*}
(\id_\mathcal{C}-gf-\Phi)(\alpha_j) = 
\begin{cases}
\alpha_j-\Phi\alpha_j-\beta_j+\theta\, \dma\beta_j & 0\leq j\leq n \\
\alpha_j-\Phi\alpha_j-\theta\,\omega^{n-k}\wedge\left(\beta_k+\dpa\beta_{k-1}\right) & n+1\leq j\leq 2n+1,\, k=2n+1-j
\end{cases}
\end{align*}
%\noindent \textbf{Case 1}. $j\leq n$.  
%For $\alpha\in\mathcal{C}^j$, it follows from the definitions that $gf(\alpha_j)=\beta_j-\theta\, \dma\beta_j$, and therefore,
%\begin{align*}
%(\id_\mathcal{C}-gf-\Phi)(\alpha_j)=\alpha_j-\Phi\alpha_j-\beta_j+\theta\, \dma\beta_j.
%end{align*}
As for the right-hand side, we have for all $j$ 
\begin{align*}
G\alpha_j&=\eta_{j-1} + \th\, \LI\eta_j\\
D_{\mathcal{C}}G\alpha_j&=(d_A)\eta_{j-1} + \om \w L^{-1}\eta_j - \th\left[(d_A)(\LI\eta_j)+ \Phi \eta_{j-1}\right]\\
%\end{align*}
%and
%\begin{align*}
D_{\mathcal{C}}\alpha_j &= 
(d_A)\eta_j + \om \w \eta_{j-1} -\th\left[(d_A)\eta_{j-1} + \Phi \eta_j\right]\\
GD_{\mathcal{C}}\alpha_j &= -(d_A)\eta_{j-1} -\Phi\eta_j + \th\left[\LI(d_A) \eta_j + \LI(\om\w\eta_{j-1})\right]
\end{align*}
which implies
\begin{align}\label{rhseq}
(D_{\mathcal{C}}G+GD_{\mathcal{C}})\alpha_j &= \om \w \LI\eta_j -\Phi \alpha_j + \th\left[\LI(d_A)\eta_j-(d_A)\LI\eta_j+\LI(\om\w \eta_{j-1})\right]
\end{align}
When $j\leq n$, this implies using the form decomposition in \eqref{conek1} and the properties of $(d_A)$ in \eqref{dAdef} and \eqref{dAcom} that
\begin{align*}
(D_{\mathcal{C}}G+GD_{\mathcal{C}})\alpha_j &=\left(\eta_j-\be_{j}\right) -\Phi \alpha_j + \th\left[ \dma \be_j + \eta_{j-1}\right] \\
&=(\alpha_j-\be_j) -\Phi\alpha_j + \th\, \dma\be_j = (\id_\mathcal{C}-gf-\Phi)(\alpha_j).
\end{align*}
And when $j>n$, with $k=2n+1-j$ so that $\alpha_j=\eta_{2n+1-k} + \theta\,\eta_{2n-k}$, \eqref{rhseq} together with the form decomposition in \eqref{conek2} implies
\begin{align*}
(D_{\mathcal{C}}G+GD_{\mathcal{C}})\alpha_j &=\eta_j - \Phi\alpha_j + \th\left[-\om^{n-k}\w\dpa\be_{k-1} + (\eta_{j-1}-\om^{n-k}\w \be_k) \right]\\
&= \alpha_j -\Phi\alpha_j - \th \left[\om^{n-k} \w (\dpa\be_{k-1}+\be_k)\right] =(\id_\mathcal{C}-gf-\Phi)(\alpha_j).
\end{align*}
\end{proof}

\begin{lem}\label{phi exact}
For arbitrary $\alpha\in \mathcal{C}$, if $\alpha$ is $D_{\mathcal{C}}$-closed, then $\Phi\alpha$ is $D_{\mathcal{C}}$-exact.
\end{lem}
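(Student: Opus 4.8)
The plan is to read the statement off directly from the definition of $D_{\mathcal{C}}$ together with the cone decomposition, rather than going through the homotopy operator $G$ of Lemma \ref{gfhom}. Write $\alpha \in \mathcal{C}^j(M,E)$ in its cone form $\alpha = \eta_j + \theta\,\eta_{j-1}$ with $\eta_j \in \Omega^j(M,E)$ and $\eta_{j-1}\in\Omega^{j-1}(M,E)$, and recall the expression already used in the proofs of the chain-map lemmas,
\[
D_{\mathcal{C}}\alpha = d_A\eta_j + \omega\wedge\eta_{j-1} - \theta\bigl(d_A\eta_{j-1} + \Phi\,\eta_j\bigr).
\]
Setting $D_{\mathcal{C}}\alpha = 0$ and reading off the $\theta$-component gives the identity $\Phi\,\eta_j = -\,d_A\eta_{j-1}$. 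Substituting this into $\Phi\alpha = \Phi\,\eta_j + \theta\,\Phi\,\eta_{j-1}$ yields $\Phi\alpha = -\,d_A\eta_{j-1} + \theta\,\Phi\,\eta_{j-1}$.

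Next I would recognize the right-hand side as $-D_{\mathcal{C}}$ applied to $\eta_{j-1}$, now regarded as an element of $\mathcal{C}^{j-1}(M,E)$ with vanishing $\theta$-part: the displayed formula gives $D_{\mathcal{C}}(\eta_{j-1}) = d_A\eta_{j-1} - \theta\,\Phi\,\eta_{j-1}$ (the $\omega\wedge 0$ term drops). Hence $\Phi\alpha = -D_{\mathcal{C}}(\eta_{j-1})$, which is $D_{\mathcal{C}}$-exact, and the lemma follows. The boundary degrees are automatic: for $j=0$ one has $\eta_{j-1}=0$ and closedness already forces $\Phi\eta_0 = 0$; for $j = 2n+1$ every element of $\mathcal{C}^{2n+1}$ is $D_{\mathcal{C}}$-closed because $\mathcal{C}^{2n+2}=0$, and the same computation applies with $\eta_{2n+1}=0$.

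It is worth recording the cleaner reformulation this argument suggests, which I would add as a remark: if one defines $h:\mathcal{C}^j(M,E)\to\mathcal{C}^{j-1}(M,E)$ by $h(\eta_j + \theta\,\eta_{j-1}) = \eta_{j-1}$ (viewed with trivial $\theta$-part), then computing $D_{\mathcal{C}}h\alpha$ and $h\,D_{\mathcal{C}}\alpha$ term by term and adding --- the two $d_A\eta_{j-1}$ contributions cancel --- gives the chain-homotopy identity $D_{\mathcal{C}}h + h\,D_{\mathcal{C}} = -\,\Phi$ on all of $\mathcal{C}^*(M,E)$. Thus $-\Phi$ is null-homotopic, so it annihilates the cohomology of $(\mathcal{C}^*(M,E),D_{\mathcal{C}})$, and Lemma \ref{phi exact} is the immediate corollary; this is the cone-complex counterpart of Lemma \ref{Phimap}. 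I do not anticipate any real obstacle --- the only care needed is with the sign in $D_{\mathcal{C}}$ and with tracking the form degree of each piece. The alternative route through Lemma \ref{gfhom} is less convenient, since the residual term $\alpha - gf(\alpha)$ it produces is not obviously $D_{\mathcal{C}}$-exact without assuming $\Phi$ nilpotent.
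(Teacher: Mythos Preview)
Your proof is correct and is essentially identical to the paper's own argument: both decompose $\alpha=\eta_j+\theta\,\eta_{j-1}$, read off $d_A\eta_{j-1}=-\Phi\eta_j$ from the $\theta$-component of $D_{\mathcal{C}}\alpha=0$, and then observe that $D_{\mathcal{C}}(-\eta_{j-1})=\Phi\alpha$. Your added remark that this packages into a chain homotopy $D_{\mathcal{C}}h+hD_{\mathcal{C}}=-\Phi$ with $h(\eta_j+\theta\,\eta_{j-1})=\eta_{j-1}$ is a clean strengthening not stated in the paper.
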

\begin{proof}
Suppose $\alpha\in\mathcal{C}^j$, we can set $\alpha=\eta_j+\theta\eta_{j-1}$ where $\eta_j,\eta_{j-1}\in\Omega^*(M,E)$. Since $\alpha$ is $D_{\mathcal{C}}$-closed,
$$
0=D_{\mathcal{C}}\alpha=(d_A\eta_j+\omega\eta_{j-1})-\theta(\Phi\eta_j+d_A\eta_{j-1}).
$$
So $d_A\eta_{j-1}=-\Phi\eta_j$. Then we have
$$
D_{\mathcal{C}}(-\eta_{j-1})=-d_A\eta_{j-1}+\theta\Phi\eta_{j-1}=\Phi(\eta_j+\theta\eta_{j-1}).
$$
\end{proof}

Now let $f^*$ and $g^*$ be the induced maps between $H_\CC^*(M,E)$ and $PH^*(M,E)$. Then, from Lemma \ref{gfhom} and \ref{phi exact}, we have 
\begin{align*}
f^*g^*&=\id \qquad \text{ on } \,PH^*(M,E),\\
g^*f^*&=\id \qquad \text{ on }  \quad H_\CC^*(M,E),
\end{align*}
which imply the following theorem.

\begin{thm}
There is an isomorphism between the twisted cohomologies:
$$
PH^*(M,E)\cong H_\CC^*(M,E)\,.
$$
\end{thm}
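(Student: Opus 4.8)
The plan is to pass everything to cohomology and show that the chain maps $f$ and $g$ constructed above descend to mutually inverse isomorphisms. Since $f:\mathcal{C}^*(M,E)\to\mathcal{F}^*(M,E)$ and $g:\mathcal{F}^*(M,E)\to\mathcal{C}^*(M,E)$ are chain maps by the two preceding lemmas, they induce well-defined linear maps $f^*:H_\CC^*(M,E)\to PH^*(M,E)$ and $g^*:PH^*(M,E)\to H_\CC^*(M,E)$. It then suffices to verify the two identities $f^*g^*=\id$ on $PH^*(M,E)$ and $g^*f^*=\id$ on $H_\CC^*(M,E)$.

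The first identity is immediate: Lemma~\ref{gfhom} gives $fg=\id_\mathcal{F}$ already at the cochain level, hence a fortiori $f^*g^*=\id$. For the second identity I would use the homotopy relation of Lemma~\ref{gfhom}, namely $\id_\mathcal{C}-gf=\Phi+D_{\mathcal{C}}G+GD_{\mathcal{C}}$. Applying both sides to a $D_{\mathcal{C}}$-closed element $\alpha\in\mathcal{C}^j(M,E)$, the term $GD_{\mathcal{C}}\alpha$ vanishes, the term $D_{\mathcal{C}}G\alpha$ is manifestly $D_{\mathcal{C}}$-exact, and the term $\Phi\alpha$ is $D_{\mathcal{C}}$-exact by Lemma~\ref{phi exact}. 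Therefore $(\id_\mathcal{C}-gf)\alpha$ is $D_{\mathcal{C}}$-exact, so $[gf\,\alpha]=[\alpha]$ in $H_\CC^*(M,E)$, i.e.\ $g^*f^*=\id$. Combining the two, $f^*$ and $g^*$ are inverse to one another, which yields the claimed isomorphism $PH^*(M,E)\cong H_\CC^*(M,E)$.

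I do not expect a genuine obstacle, since all the analytic content has been front-loaded into the chain-map lemmas, Lemma~\ref{gfhom}, and Lemma~\ref{phi exact}. The one point that requires a moment's care — and the only place the proof departs from the textbook homotopy-equivalence argument — is that the relation in Lemma~\ref{gfhom} is not the usual $\id-gf=D_{\mathcal{C}}G+GD_{\mathcal{C}}$ but carries the extra summand $\Phi$; it is precisely Lemma~\ref{phi exact}, the statement that $\Phi$ annihilates $D_{\mathcal{C}}$-cohomology classes, that allows this extra term to be absorbed so that $gf$ still induces the identity on $H_\CC^*(M,E)$. As a consistency check one can note that the corresponding fact on the $\mathcal{F}$-side (that $\Phi$ kills $PH^*(M,E)$-classes, Lemma~\ref{Phimap}) is the image of Lemma~\ref{phi exact} under $f$ and $g$, although this observation is not logically needed for the argument.
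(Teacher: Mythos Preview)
Your proposal is correct and follows exactly the paper's approach: the paper's proof is simply the two lines ``from Lemma~\ref{gfhom} and~\ref{phi exact}, we have $f^*g^*=\id$ on $PH^*(M,E)$ and $g^*f^*=\id$ on $H_\CC^*(M,E)$,'' and you have merely spelled out the details of how the extra $\Phi$ term is absorbed via Lemma~\ref{phi exact}.
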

From the local cohomologies of $PH^*(U, E)$ in Theorem \ref{localP}, we have the corollary:
\begin{cor}
In a local coordinate chart $U$, there exists some $\lambda\in\Omega^1(U)$ such that $d\lambda=\omega$. The local cohomology
$$
H_\CC^j(U,E)=
\begin{cases}
\ker\Phi, & j=0 \\
(\lambda-\theta)\ \mathrm{coker}\ \Phi, & j=1 \\
0, & j\geq 2
\end{cases}
$$
\end{cor}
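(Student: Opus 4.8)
The plan is to combine the global isomorphism $PH^*(M,E)\cong H_\CC^*(M,E)$ from the theorem just above with the explicit local computation of $PH^*(U,E)$ recorded in Theorem~\ref{localP}. Since the isomorphism is natural in $M$ (both $f$ and $g$ from Definition~\ref{fgdef}, as well as the homotopy operator $G$, are defined pointwise in terms of the Lefschetz data and $d_A$, hence restrict to any open $U$), applying it with $M$ replaced by the coordinate chart $U\cong\mathbb{R}^{2n}$ immediately gives $H_\CC^j(U,E)\cong PH^j(U,E)$ for every $j$. So the entire content of the corollary is to transcribe Theorem~\ref{localP} through this isomorphism and, crucially, to identify the image of the generators under $g$ so that the ``$(\lambda-\theta)\,\mathrm{coker}\,\Phi$'' description is correct rather than merely an abstract vector-space statement.

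The steps I would carry out: first, recall from Theorem~\ref{localP} that on $U$ one has a frame with $A=\Phi\lambda$, $\Phi$ constant, and $PH^0_+(U,E)=\ker\Phi$ (represented by constant sections), $PH^1_+(U,E)=\lambda\,\mathrm{coker}\,\Phi$, and all other $PH^k_\pm(U,E)=0$. Second, apply the chain map $g:\mathcal{F}^j(U,E)\to\mathcal{C}^j(U,E)$ to these representatives. For $j=0$: a constant section $\beta_0\in\ker\Phi$ maps to $g(\beta_0)=\beta_0-\theta\,\dma\beta_0=\beta_0$ since $\dma\beta_0 = L^{-1}(d+\Phi\lambda)\beta_0=0$ for constant $\beta_0\in\ker\Phi$; hence $H_\CC^0(U,E)=\ker\Phi$, represented by constant sections. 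For $j=1$: a representative $\beta_1=\lambda\,\sigma_0$ with $\sigma_0\in\mathrm{coker}\,\Phi$ constant maps to $g(\beta_1)=\beta_1-\theta\,\dma\beta_1$; one computes $\dma(\lambda\sigma_0)=L^{-1}(d+\Phi\lambda)(\lambda\sigma_0)=L^{-1}(\omega\sigma_0 + \Phi\lambda\wedge\lambda\,\sigma_0)=\sigma_0$, so $g(\beta_1)=\lambda\sigma_0-\theta\sigma_0=(\lambda-\theta)\sigma_0$, giving $H_\CC^1(U,E)=(\lambda-\theta)\,\mathrm{coker}\,\Phi$. Third, for $j\geq2$, since $PH^j(U,E)=0$ and $g$ induces an isomorphism on cohomology, $H_\CC^j(U,E)=0$.

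The step I expect to be the main (though minor) obstacle is verifying that the isomorphism $PH^*\cong H_\CC^*$ genuinely localizes — i.e., that $f$, $g$, and $G$ are well-defined on $\mathcal{C}^*(U,E)$ and $\mathcal{F}^*(U,E)$ and satisfy the same relations of Lemma~\ref{gfhom} there — and the accompanying small computations $\dma\beta_0=0$ and $\dma(\lambda\sigma_0)=\sigma_0$ needed to pin down the explicit generators; these are direct consequences of $A=\Phi\lambda$ with $\Phi$ constant annihilating $\ker\Phi$ and of $d\lambda=\omega$, $\lambda\wedge\lambda=0$. Everything else is formal: the corollary follows by pushing Theorem~\ref{localP} through the natural chain homotopy equivalence of Definition~\ref{fgdef} and Lemma~\ref{gfhom}.
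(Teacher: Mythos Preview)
Your proposal is correct and follows exactly the route the paper intends: the corollary is stated immediately after the isomorphism $PH^*(M,E)\cong H_\CC^*(M,E)$ with only the remark that it follows from Theorem~\ref{localP}, and you have supplied the omitted details---localizing the chain maps $f,g,G$ to $U$ and pushing the explicit generators of $PH^0_+(U,E)$ and $PH^1_+(U,E)$ through $g$ to obtain $\ker\Phi$ and $(\lambda-\theta)\,\mathrm{coker}\,\Phi$. Your two small computations $\dma\beta_0=0$ and $\dma(\lambda\sigma_0)=\sigma_0$ are correct and are precisely what is needed.
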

And finally, with Theorem \ref{vanP} which states the triviality of $PH^*(M,E)$ when $\Phi$ is invertible, we also have the following:
\begin{cor}
When $\Phi$ is invertible,  $H_\CC^*(M,E)=0$.
\end{cor}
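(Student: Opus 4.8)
The plan is to deduce this corollary immediately from the two results already established just above it. By Theorem \ref{vanP}, invertibility of $\Phi$ forces $PH^*(M,E)=0$, and the preceding theorem provides the isomorphism $PH^*(M,E)\cong H_\CC^*(M,E)$ induced by the chain maps $f$ and $g$ of Definition \ref{fgdef}. Composing these two statements gives $H_\CC^*(M,E)=0$, which is exactly the claim. So the only step needed is to invoke these two facts in sequence.

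For completeness I would also record a self-contained argument on the cone complex itself, mirroring the proof of Theorem \ref{vanP}. First I would note that $d_A\Phi=0$ implies $d_A\Phi^{-1}=0$, via the same computation $0 = d_A(\Phi\,\Phi^{-1}) = \Phi\, d_A\Phi^{-1}$ used in that proof. Since $\Phi$ and $\Phi^{-1}$ commute, $\Phi^{-1}$ then commutes with $d_A$ and with multiplication by $\theta\,\Phi$, hence with the cone differential $D_{\mathcal{C}} = d_A - \theta\,\Phi$ on $\mathcal{C}^*(M,E)$. Therefore, given any $D_{\mathcal{C}}$-closed $\alpha$, the element $\Phi^{-1}\alpha$ is again $D_{\mathcal{C}}$-closed, and applying Lemma \ref{phi exact} to $\Phi^{-1}\alpha$ shows that $\alpha = \Phi(\Phi^{-1}\alpha)$ is $D_{\mathcal{C}}$-exact. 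Hence every class in $H_\CC^*(M,E)$ vanishes.

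There is essentially no obstacle here: the corollary is a formal consequence of material already in hand. The only point requiring a moment's care, in the self-contained version, is the verification that $\Phi^{-1}$ is covariantly constant and therefore commutes with $D_{\mathcal{C}}$, but this is immediate from $d_A\Phi=0$ together with the identity above. I would present the one-line deduction through the isomorphism theorem as the proof of the corollary, and, if desired, include the direct cone-complex argument as a remark.
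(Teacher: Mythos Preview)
Your proposal is correct and matches the paper's approach exactly: the corollary is stated as an immediate consequence of Theorem \ref{vanP} together with the isomorphism $PH^*(M,E)\cong H_\CC^*(M,E)$, with no further argument given. Your additional self-contained cone-complex argument via Lemma \ref{phi exact} is a valid alternative proof not present in the paper, but the primary deduction you propose is precisely what the authors intend.
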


%%%%%%%%%%%%%%%%%%%%%%%%%%%%%%%%%%%%

%%%%%%%%%%%%%%%%%%%%%%%%%%%%%%%%%%%%

%\begin{thebibliography}{99}
\begin{bibdiv}
\begin{biblist}[\normalsize]

%\bibitem{Bott-Tu}
\bib{Bott-Tu}{book}{
   author={Bott, R.},
   author={Tu, L. W.},
   title={Differential forms in algebraic topology},
   series={Graduate Texts in Mathematics},
   volume={82},
   publisher={Springer-Verlag, New York-Berlin},
   date={1982},
   pages={xiv+331},
   isbn={0-387-90613-4},
%   review={\MR{658304}},
}

%\bibitem{Gibson}
\bib{Gibson}{book}{
   author={Gibson, M.},
   title={Properties of the $A_\infty$ Structure on Primitive Forms and Its
   Cohomology},
   note={Thesis (Ph.D.)--University of California, Irvine},
   publisher={ProQuest LLC, Ann Arbor, MI},
   date={2019},
   pages={83},
   isbn={978-1687-92504-6},
%   review={\MR{4051258}},
}

\bib{Keller}{article}{
   author={Keller, B.},
   title={Introduction to $A$-infinity algebras and modules},
   journal={Homology Homotopy Appl.},
   volume={3},
   date={2001},
   number={1},
   pages={1--35},
   issn={1532-0081},
%   review={\MR{1854636}},
%   doi={10.4310/hha.2001.v3.n1.a1},
}

%\bibitem{Ramanan}
\bib{Ramanan}{book}{
   author={Ramanan, S.},
   title={Global calculus},
   series={Graduate Studies in Mathematics},
   volume={65},
   publisher={American Mathematical Society, Providence, RI},
   date={2005},
   pages={xii+316},
   isbn={0-8218-3702-8},
%   review={\MR{2104612}},
%   doi={10.1090/gsm/065},
}

%\bibitem{TT}
\bib{TT}{article}{
   author={Tanaka, H. L.},
   author={Tseng, L.-S.},
   title={Odd sphere bundles, symplectic manifolds, and their intersection
   theory},
   journal={Camb. J. Math.},
   volume={6},
   date={2018},
   number={3},
   pages={213--266},
   issn={2168-0930},
%   review={\MR{3855080}},
%   doi={10.4310/CJM.2018.v6.n3.a1},
}

\bib{TTY}{article}{
   author={Tsai, C.-J.},
   author={Tseng, L.-S.},
   author={Yau, S.-T.},
   title={Cohomology and Hodge theory on symplectic manifolds: III},
   journal={J. Differential Geom.},
   volume={103},
   date={2016},
   number={1},
   pages={83--143},
   issn={0022-040X},
%   review={\MR{3488131}},
}

\bib{TY2}{article}{
   author={Tseng, L.-S.},
   author={Yau, S.-T.},
   title={Cohomology and Hodge theory on symplectic manifolds: II},
   journal={J. Differential Geom.},
   volume={91},
   date={2012},
   number={3},
   pages={417--443},
   issn={0022-040X},
%   review={\MR{2981844}},
}

\bib{TZ}{article}{
  author={Tseng, L.-S.},
  author={Zhou, J.},
  note={to appear},
}

%\end{thebibliography}
\end{biblist}
\end{bibdiv}

\vskip 1cm
\noindent
%Li-Sheng Tseng\\
{Department of Mathematics, University of California, Irvine, CA 92697, USA}\\
{\it Email address:}~{\tt lstseng@math.uci.edu}
\vskip .5 cm
\noindent
%Jiawei Zhou\\
{Yau Mathematical Sciences Center, Tsinghua University, Beijing, 100084, China}\\
{\it Email address:}~{\tt jiaweiz1990@mail.tsinghua.edu.cn}

\end{document}